\newcommand{\op}{\oplus}
\newcommand{\od}{\odot}
\newdimen\xleftright
\newtheorem{theorem}{Theorem}[section]
\newtheorem{lemma}[theorem]{Lemma}
\newtheorem{proposition}[theorem]{Proposition}
\newtheorem{corollary}[theorem]{Corollary}
\theoremstyle{definition}
\newtheorem{definition}[theorem]{Definition}
\newtheorem{example}[theorem]{Example}
\theoremstyle{remark}
\newtheorem{remark}[theorem]{Remark}
\numberwithin{equation}{section}
\begin{document}

\title{\bf A Complete Axiomatisation for the Logic of Lattice Effect Algebras}
\author{Soroush Rafiee Rad, Amir Hossein Sharafi, Sonja Smets}
\date{}	
	\maketitle

\begin{abstract}
In a recent work Foulis and Pulmannov\' a \cite{Foulis2012} studied the logical connectives in lattice effect algebras. In this paper we extend their study and investigate further the logical calculus for which the lattice effect algebras can serve as semantic models. We shall first focus on some properties of lattice effect algebras and will then give a complete axiomatisation of this logic.
\end{abstract}

\textbf{Keywords:} lattice effect algebra, weak lattice effect algebra, unsharp quantum logic, Sasaki arrow.

\section{Introduction}

Hilbert spaces have been shown to provide a suitable mathematical setting for the modelling and analysis of quantum systems, where, in particular, the set of tests (and more generally measurements) on the system are identified with the set of closed linear subspaces of the Hilbert space, or equivalently the set of projectors on these closed linear subspaces. It is well known that given a Hilbert space $\mathcal{H}$\footnote{of dimension at least 3} and a set of tests $\Pi(\mathcal{H})$ (capturing the properties of some quantum system) every probability function on $\Pi(\mathcal{H})$ can be defined from a density operator. The set of projectors, however, does not capture the complete set of operators on $\mathcal{H}$ that are assigned a probability by the probability functions defined from the density operators. The notion of {\em effect} on a Hilbert space is defined to capture all such operators, be it a projector or not. In this sense effects of Hilbert spaces can be thought as capturing a generalised notion of tests and measurements on quantum systems.\\
These generalised tests are particularly relevant in the study of the unsharp approach to quantum systems \cite{Dalla2004}. In the sharp approach there is no uncertainty concerning the properties of the quantum systems that are measured. In this sense the uncertainty involved in a measurement (represented by the assignment of probabilities other than 0 or 1) can be understood as the uncertainty concerning the state of the system and points to the fact that ``even a pure state in QT does not represent logically complete information that is able to decide any possible physical event" \cite{Dalla2004}. The unsharp approach allows for an extra layer of uncertainty concerning the properties themselves, which can be understood as the uncertainty arising from the accuracy of the measurement apparatus or the operational definitions of the physical quantities captured by the property that is being measured. \\ 
The notion of effect and the algebraic structures for their representations have been investigated extensively and there is a rich literature on the study of both the effect algebras and the lattice effect algebras, developed in the works of Ludwig, Kraus, Davies, Busch, Lahti, Mittelstaedt, Gudder, Foulis, Bennett, Dalla Chiara, Giuntini, Pulmannov\' a and Dvure\v censkij,  amongst others,
 \cite{Bennett1995},\cite{Busch1991}, \cite{Dalla2004}, \cite{Davies1975}, \cite{Dvurecenskij2000}, \cite{Foulis1994}, \cite{Greechie1994}, \cite{Gudder1995}, \cite{Gudder1998}, \cite{Jenca2007}, \cite{Jenca1999}, \cite{Kraus1983},  \cite{Ludwig1983}, \cite{Riecanova2000}, \cite{Riecanova2009}. 
Our goal in this paper is to contribute to this literature and is in-line with some recent works on the study of the logical structure of the lattice effect algebras, for example, \cite{Chajda2017}.\\
Following Foulis and Pulmannov\' a \cite{Foulis2012}, we are in particular interested to study the lattice effect algebras as a model for a logical system in the same manner that orthomodular lattices and MV-algebras are studied as the models for sharp quantum logic and Lukasiewicz many-valued logic respectively. We will thus study a logical calculus LEL for which the lattice effect algebras can serve as a semantic model. A lattice effect algebra carries natural operations of orthosupplementation, Sasaki product and Sasaki arrow which, following \cite{Foulis2012}, we shall consider as representing the operations of negation, conjunction and implication in our logical calculus LEL. In this analysis we extend the work of Foulis and Pulmannov\' a by looking beyond the algebraic structure and the partial order of the lattice effect algebras and will investigate the underlying logic in terms of proof theory and soundness and completeness.\\
After giving the preliminary definitions and constructions in Section \ref{sec2}, we first study the properties of lattice effect algebras in Section \ref{sec3} before proceeding to analyse the underlying logic. In particular, we shall first characterize the residuation law of Sasaki product and Sasaki arrow in a manner analogous to the characterization of the duality between Sasaki projection and Sasaki hook on ortholattices in terms of the orthomodular law. Next we shall give a canonical construction of lattice effect algebras from  a conjunction/implication lattice and then proceed to study the underlying logic of the lattice effect algebras and  will give a sound and complete axiomatization of this logic in Section \ref{sec4}. 

\section{Preliminaries} \label{sec2}

Let $\mathcal{H}$ be a Hilbert space and $\Pi(\mathcal{H})$ and $\mathcal{D}(\mathcal{H})$ be the set of projectors and the set of density operators on $\mathcal{H}$ respectively. Every density operator $\rho$ defines a  probability measure, $m_{\rho}$ on $\Pi(\mathcal{H})$ where for each $\mathcal{P} \in \Pi(\mathcal{H})$, $m_{\rho}(\mathcal{P})$ captures the probability of event $\mathcal{P}$ (or more precisely the probability of a successful test of the property represented by $\mathcal{P}$) at the state represented by $\rho$.\\
As mentioned in \cite{Dalla2004}, an interesting question arises as to the adequacy of this mathematical representation. On one direction, given a set of events $\Pi(\mathcal{H})$ for some $\mathcal{H}$ with dimension at least 3, it is well known by the Gleason’s Theorem, that the set of density operators $\mathcal{D}(\mathcal{H})$ gives an optimal representation of the state space in the sense that every probability function on the set of events will be defined by a density operator in $\mathcal{D}(\mathcal{H})$. In the other direction, however, given the set of density operators $\mathcal{D}(\mathcal{H})$, the set of projectors of $\mathcal{H}$ is not the largest set of operators that are assigned a probability according to the Born rule by the probability measures defined by the density operators in $\mathcal{D}(\mathcal{H})$. In particular, there are bounded linear operators $E$ on $\mathcal{H}$ that resemble events (in the sense that they are assigned probability values by density operators) which are not idempotent and hence are not included in the set of projectors on $\mathcal{H}$. The notion of {\em effect} on Hilbert spaces is defined to capture all such operators whether they are given by a projector or not and as mentioned above, in this sense they generalise the notion of tests for a quantum system.\\
An important characteristic feature of quantum systems is that not all such generalised tests can be performed simultaneously. A natural operation to consider on the set of effects is hence that of an {\em exclusive} disjunction for the incompatible tests. The set of effects of a Hilbert space with this operation forms an algebra that was first introduced by Foulis and Bennett \cite{Foulis1994} and is referred to as an {\em effect algebra}. The algebra of effects generalizes orthomodular lattices and MV-algebras which include non-compatible pairs of tests and  un-sharp tests respectively. Thus by generalising to effect algebras one can assign probabilities to events that represent properties that may be incompatible as well as those with fuzziness, uncertainty or un-sharpness. The operation of the algebra, as we shall shortly describe in more details, can be used to define a partial order on the set of effects. When the set of effects with this partial order forms a lattice the resulting structure is called the lattice-ordered effect algebra or {\em lattice effect algebra} for short.\\
\noindent We will start this section by recalling some basic notions and definitions related to effect algebras and the partial order imposed by their main operation. We shall briefly look at some derivative operations that can be defined on these structures which will be useful later on and we will point out some useful properties of these algebras. \\

\begin{definition}
A \textit{bounded involutive lattice} is a structure $(L;\leq,',0,1)$ where:
\begin{itemize}
\item[(i)] $(L;\leq,0,1)$ is a bounded lattice consisting of a set of elements $L$ a binary relation $\leq$ and two special elements $0$ and $1$.
\item[(ii)] $'$ is a unary operation (called involution) that satisfies the following conditions:
\begin{itemize}
\item[(a)] for all $a \in L$, $a=a''$  \hspace{10mm} (b) For all $a, b \in L$, if $a \leq b$ then $b'\leq a'$
\end{itemize}
\end{itemize}
\end{definition}

\noindent Given a bounded lattice $(L;\leq,0,1)$, the order induces the least upper bounds and greatest lower bounds of its elements, denoted as standard by $\wedge$ and $\vee$ respectively. An element $a$ of $L$ is {\it sharp} if $a\wedge a'=0$. A bounded involutive lattice $L$ is an {\it ortholattice} if every element of $L$ is sharp.

\begin{definition}
An ortholattice $(L;\leq,0,1)$ is called an {\it orthomodular lattice} if for every $a,b \in L$
$$a\leq b \, \Rightarrow\, a\vee(a'\wedge b)=b.$$
\end{definition}

\begin{definition}\cite{Foulis2012}
A conjunction/implication lattice (CI-lattice for short) is a system $(L;\leq,\cdot,\rightarrow,0,1)$ in which $(L;\leq,0,1)$ is a bounded lattice  and for all $a,b,c \in L$ the two operations $\cdot$ and $\to$ satisfy the following conditions:
\begin{itemize}
\addtolength{\itemindent}{5mm}
\item[(CI1)] $1\cdot a=a\cdot 1=a$; \hspace{15mm}(CI2) $a\cdot c\leq b$ if and only if $c\leq a\rightarrow b$.
\end{itemize}
\end{definition}

\noindent Every CI-lattice satisfies the following properties:

\begin{itemize}
\addtolength{\itemindent}{5mm}
\item[(CI3)]{\makebox[5cm] {$a \cdot b\leq a$;\hfill} (CI4) $a \cdot (a\rightarrow b)\leq b$;}
\item[(CI5)]{\makebox[5cm] {$a\leq b\ \Leftrightarrow\ a\rightarrow b=1$; \hfill}(CI6) $ a\rightarrow(b\wedge c)=(a\rightarrow b)\wedge (a\rightarrow c)$.}
\end{itemize}

\noindent Let $L=(L;\leq,\cdot,\rightarrow,0,1)$ be a CI-lattice and for all $a \in L$ define $a'= a \to 0$. Then $L$ is called an {\it involutive CI-lattice} if $(L;\leq,',0,1)$ is a bounded involutive lattice.

\begin{definition}
In the lattice $(L;\leq)$, a map $A: L \to L$ is called residuated if there exists a map $B: L \to L$ such that $(Ax \leq y)$ iff $(x \leq By)$. $B$ is called the residual map of $A$.
\end{definition}


\begin{definition}\cite{Foulis1994}
An {\it effect algebra} is a structure $(E; \op, 0, 1)$ consisting of a set $E$, two special elements 0 and 1, and a partially defined
binary operation $\op$ on $E\times E$ satisfying the following conditions for every $a, b, c\in E$
\begin{itemize}
\addtolength{\itemindent}{3mm}
\item[(E1)]  if $a\op b$ is defined, then $b\op a$ is defined and $a\op b=b\op a$;
\item[(E2)] if $b\op c$ and $a\op(b\op c)$ are defined, then $a\op b$ and $(a\op b)\op c$ are defined and $a\op(b\op c)=(a\op b)\op c$;
\item[(E3)] for every $a\in E$, there exists a unique $a'\in E$ such that $a\op a'$ is defined and $a\op a'=1$; we call the element $a'$ the {\it orthosupplement} of $a$;
\item[(E4)] if $a\op 1$ is defined, then $a=0$.
\end{itemize}
If there is no ambiguity we will simply write $E$ to denote the effect algebra $(E; \op, 0, 1)$. For $a, b \in E$, when $a\op b$ exists, we say that $a$ is orthogonal to $b$ and will denote this by $a\perp b$.
\end{definition}

\noindent On an effect algebra $E$ one can define a partial order $\leq$ as
$$a \leq b \iff \exists c\in E \left(a \op c = b \right).$$ 

\noindent This will make $E$ into a partially ordered set and if $(E; \leq)$ is a lattice, then $(E; \op, 0, 1)$ is called a \textit{lattice effect algebra} (LEA for short). Below we list a useful set of properties of effect algebras, see  \cite{Dvurecenskij2000} or \cite{Foulis1994} for proofs.\\

\begin{proposition}

Let $E$ be an effect algebra and $a, b$ and $c$ be elements of $E$:
\begin{align*}
&(e1) \,\,\, a''=a; & &(e2)\,\,\, 1'=0, 0'=1;  \nonumber \\
&(e3)\,\,\, a\perp 0, a\op 0=a; & &(e4)\,\,\, a \perp 1 \Leftrightarrow a=0; \nonumber\\
&(e5)\,\,\, a \op b=0 \Leftrightarrow a=b=0; & &(e6)\,\,\, a \perp b \Leftrightarrow a \leq b'; \nonumber \\
&(e7)\,\,\, a \leq b\ \Rightarrow b' \leq a'; & &(e8)\,\,\, a\op c = b\op c \Rightarrow a = b; \nonumber \\
&(e9)\,\,\, a\op c \leq b\op c \Rightarrow a \leq b; & &(e10)\, a \leq b \Rightarrow a\perp(a\op b')'; \nonumber\\
&(e11)\, a \leq b \Rightarrow \ a\op(a\op b')'=b;  & & (e12)\, a\op b=c \Leftrightarrow a'=b\op c'; \nonumber \\
&(e13)\, a\op b=c\Leftrightarrow a=(b\op c')'. & & \nonumber
\end{align*}
\end{proposition}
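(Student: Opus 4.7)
The plan is to derive the thirteen identities strictly from the effect-algebra axioms (E1)--(E4), working in an order of logical dependency so that the cancellation law (e8) is available before it is needed. Throughout I would rely repeatedly on two tools: the uniqueness of orthosupplements from (E3), and the re-association of triple sums via (E2), always combined with (E1) to put a given expression into the right-associated shape that (E2) requires.

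The first batch (e1)--(e4), (e6), (e7) is essentially bookkeeping. (E3) applied to $a'$ produces a unique $a''$ with $a'\op a''=1$; since $a'\op a=1$ already by (E3) and (E1), uniqueness forces (e1), $a''=a$. For (e2), $1\op 1'=1$ is defined, so (E4) forces $1'=0$, and then (e1) yields $0'=1$. For (e3) I would apply (E2) to the triple $(0,a,a')$: both $a\op a'=1$ and $0\op 1=0\op 0'=1$ are defined, so (E2) delivers $0\op a$ defined with $(0\op a)\op a'=1$, and uniqueness of orthosupplement combined with (e1) gives $0\op a=a$. Then (e4) is (E4) together with (e3) applied at $1$. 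The equivalence (e6) $a\perp b\Leftrightarrow a\leq b'$ is obtained by reassociating $(a\op b)\op(a\op b)'=1$ in one direction and $b\op b'=1$ in the other, after commuting via (E1); (e7) is then immediate from the definition of $\leq$ together with (e6) and (e1).

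The heart of the argument is the cancellation law (e8). Assuming $a\op c=b\op c$, let $d=(a\op c)'=(b\op c)'$. Applying (E2) to $(a\op c)\op d=1$ shows $c\op d$ is defined and, by uniqueness of orthosupplements applied to $a\op(c\op d)=1$, equals $a'$. The same argument with $b$ in place of $a$ gives $c\op d=b'$, so $a'=b'$ and therefore $a=b$ by (e1). Once (e8) is in hand, (e5) falls out from $a\op b=0=a\op 0$ (using (e3)), and (e9) follows by rearranging $(a\op c)\op d=b\op c$ via (E2) and (E1) into $(a\op d)\op c=b\op c$ and then cancelling $c$.

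The remaining identities (e10)--(e13) are another round of (E2)-reassociation. For (e10)--(e11), $a\leq b$ gives $b'\leq a'$ by (e7) and hence $a\perp b'$ by (e6), so $a\op b'$ and $(a\op b')'$ both exist; then (E2) applied to $(a\op b')\op(a\op b')'=1$ shows $a\op(a\op b')'$ is defined and has orthosupplement $b'$, whence it equals $b$ by (e1). Finally (e12) and (e13) follow from $(a\op b)\op c'=1$ reassociated via (E2), together with (e1). The principal technical obstacle is not any one identity but the bookkeeping around the partiality of $\op$: (E2) is stated only in its right-associated form, so at every step one has to first use (E1) to commute arguments into the shape where the hypothesis of (E2) actually fires, and the proofs must be written with care that this reassociation is always justified.
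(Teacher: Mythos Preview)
Your plan is correct and follows the standard route; in fact you have done more than the paper, which does not prove this proposition at all but simply refers the reader to \cite{Dvurecenskij2000} and \cite{Foulis1994}. The ordering you chose---bookkeeping identities first, then cancellation (e8) via uniqueness of orthosupplements, then the remaining items by repeated (E1)/(E2) reassociation---is exactly how these references proceed.

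One small slip: your hint for (e5), ``$a\op b=0=a\op 0$ (using (e3))'', does not work as written, because (e3) gives $a\op 0=a$, not $0$, so there is nothing to cancel. The clean fix, still in the spirit of your outline, is to observe that $a\op b=0$ makes $(a\op b)\op 1=0\op 1$ defined; then (E1) and (E2) force $a\op 1$ to be defined, whence $a=0$ by (E4), and symmetrically $b=0$. Alternatively, once (e8) is available you get antisymmetry of $\leq$ and can argue $a\leq a\op b=0\leq a$. Either way the gap is cosmetic and the rest of your argument goes through.
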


\noindent Let $a,b$ be two elements of an effect algebra with $a \leq b$, the element $c$ for which $a\op c=b$ is uniquely determined by the property (e8), and we define $b \ominus a=c$. Moreover, from (e1) and (e7), it will be easy to see that every lattice effect algebra is a bounded involutive CI-lattice in which the involution of every element is its orthosupplement.\\

\begin{definition}
Let $E$ be an effect algebra. The partial operation $\od$ on $E$ is defined  as
 $$a\od b=(a'\op b')'.$$ 
\end{definition}

\begin{proposition} Let $E$ be an effect algebra. Then for all $a,b, c \in E$,

\begin{itemize}
\addtolength{\itemindent}{5mm}
\item[{\rm(e13)}]  If $a\od b$ is defined, then $b\od a$ is defined and $a\od b=b\od a$;
\item[{\rm(e14)}] $a\od a'=0$;
\item[{\rm(e15)}] If $a\od 0$ is defined, then $a=1$;
\item[{\rm(e16)}] $a\od b$ is defined if and only if $a'\leq b$;
\item[{\rm(e17)}] If $a\od c=b\od c$, then $a=b$;
\item[{\rm(e18)}] $a\od a=a$ if and only if $a=1$.
\item[{\rm(e19)}] If $b\od c$ and $a\od(b\od c)$ are defined, then $a\od b$ and $(a\od b)\od c$ are defined and $a\od(b\od c)=(a\od b)\od c$;
\end{itemize}
\end{proposition}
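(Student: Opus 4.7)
The plan is to derive each property of $\odot$ directly from its definition $a\odot b=(a'\oplus b')'$ by dualising the corresponding property of $\oplus$ stated in the previous proposition, together with the involution identities (e1), (e2) and the order-reversal (e7). Each item should follow by unfolding the definition, invoking an already-established fact for $\oplus$, and then taking orthosupplements; the work is mostly bookkeeping.

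First I would handle (e16), since all other items rely on knowing when $\odot$ is defined: by the definition $a\odot b$ is defined exactly when $a'\oplus b'$ is defined, i.e.\ $a'\perp b'$, which by (e6) and (e1) is equivalent to $a'\leq b''=b$. With that in hand, (e13) is immediate: (E1) gives $a'\oplus b'=b'\oplus a'$, so applying $'$ yields both definedness of $b\odot a$ and the equality $a\odot b=b\odot a$. For (e14), compute $a\odot a'=(a'\oplus a'')'=(a'\oplus a)'=1'=0$ by (e1), (E3) and (e2). For (e15), if $a\odot 0$ is defined then $a'\oplus 0'=a'\oplus 1$ is defined, so (e4) forces $a'=0$ and (e1)--(e2) give $a=1$. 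For (e17), the equality $(a'\oplus c')'=(b'\oplus c')'$ collapses under involution (applying $'$ and using (e1)) to $a'\oplus c'=b'\oplus c'$, whence (e8) gives $a'=b'$ and therefore $a=b$. For (e18), $a\odot a=a$ rewrites as $a'\oplus a'=a'=a'\oplus 0$, and (e8) then yields $a'=0$, i.e.\ $a=1$; conversely $1\odot 1=(0\oplus 0)'=0'=1$.

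The only item requiring marginally more care is (e19), the associativity of $\odot$. Unfolding, the hypothesis says that $b'\oplus c'$ and $a'\oplus(b'\oplus c')$ are both defined. Axiom (E2) for $\oplus$ then delivers definedness of $a'\oplus b'$ and $(a'\oplus b')\oplus c'$ together with the equality $a'\oplus(b'\oplus c')=(a'\oplus b')\oplus c'$. Using (e1) to rewrite $((b'\oplus c')')''=b'\oplus c'$ and analogously on the other side, taking orthosupplements translates this back as: $a\odot b$ and $(a\odot b)\odot c$ are defined, and $a\odot(b\odot c)=(a\odot b)\odot c$.

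I do not anticipate any real obstacle; the whole proposition is the De Morgan dual of facts already available for $\oplus$. The one subtle point to keep track of is that ``definedness'' must be propagated at each step so that no expression $x\oplus y$ is invoked before its orthogonality is justified, which is precisely why I would prove (e16) first and refer back to it when unfolding the hypotheses of (e13)--(e19).
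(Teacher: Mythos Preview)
The paper states this proposition without proof, so there is no argument to compare against. Your proposal is correct and is the standard way one would establish these facts: each item is the De~Morgan dual of a property already known for $\oplus$, and your choice to establish (e16) first so that definedness can be tracked cleanly in the remaining items is the right organisational move.
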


\begin{definition}\cite[Definition 3.4]{Bennett1995}
Let $L$ be an LEA, then for $p, q \in L$, the Sasaki mapping $\varphi: L \times L \to L$ is defined by $\varphi(p, q) := p \ominus (p \wedge q') =
(p' \op (p \wedge q'))'= p\od(p'\vee q)$.

\end{definition}

\begin{definition} \cite{Foulis2012} Let $(E,\leq, 0, 1)$ be a lattice effect algebra. The binary operations $\otimes: E \times E \to E$ and $\to_{s}: E \times E \to E$ defined from Sasaki mapping as  
$$a\otimes b=\varphi(a,b)= (a' \op (a \wedge b'))'$$
and
$$a \to_{s} b = (a \otimes b')'=a'\op(a\wedge b)$$
are total operations on $E$ which we shall refer to as the Sasaki product and the Sasaki arrow respectively.
 \end{definition}

\begin{proposition}\label{LEAtoCI}(From lattice effect algebras to CI-lattices)
Let $(E;\op, 0,1)$ be a lattice effect algebra and define binary operation $\otimes$ and $\to_s$ by Sasaki product and Sasaki arrow as above. Then 
$$(E; \leq, ', \otimes, \to_s, 0,1 )$$
is an involutive CI-lattice which satisfies the following properties:\\
{\rm(i)}  $c\leq a,\ c\leq b\ \Rightarrow c\leq a\otimes(a\rightarrow_s b)\quad \text{\small(divisibility law)}$,\\
{\rm(ii)}  $[(a\rightarrow_s b)\rightarrow_s c]\otimes[(b\rightarrow_s a)\rightarrow_s c]\leq c\quad \text{\small(strong prelinearity law)}$,\\
{\rm(iii)}  $a\otimes b\leq c'\ \Rightarrow\ a\otimes c\leq b'\quad \text{\small(self-adjointness law)}$.
\end{proposition}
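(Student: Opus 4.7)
The plan is to verify the involutive CI-lattice structure first, and then the three auxiliary laws (i)--(iii). Strategically, I prove self-adjointness (iii) early, because it is the non-trivial ingredient needed for the residuation axiom CI2; the remaining CI-lattice checks are direct computations.

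The easy verifications come first. CI1 holds by $1\otimes a=(1'\op(1\wedge a'))'=(0\op a')'=a$ and symmetrically $a\otimes 1=a$, and the derived involution $a\to_s 0=a'\op(a\wedge 0)=a'$ agrees with the effect-algebra orthosupplement, so the structure is bounded involutive. For divisibility (i), I aim for the stronger identity $a\otimes(a\to_s b)=a\wedge b$. Using $a\to_s b=(a\otimes b')'$ together with $a\otimes b'\leq a$, we have $a\wedge(a\to_s b)'=a\otimes b'=a\ominus(a\wedge b)$, whence the effect-algebra identity $a\ominus(a\ominus x)=x$ for $x\leq a$ gives $a\otimes(a\to_s b)=a\ominus(a\ominus(a\wedge b))=a\wedge b$. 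The conclusion $c\leq a\otimes(a\to_s b)$ then reduces to the tautology $c\leq a\wedge b$ whenever $c\leq a$ and $c\leq b$.

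Self-adjointness (iii) rests on two observations: $a\otimes b\leq a$ always (since $(a\otimes b)\op(a\wedge b')=a$), and $u\mapsto a\ominus u$ is order-reversing on $[0,a]$ (for $u\leq v\leq a$, the identity $a\ominus u=(v\ominus u)\op(a\ominus v)$ forces $a\ominus u\geq a\ominus v$). Given $a\otimes b\leq c'$, I combine it with $a\otimes b\leq a$ to obtain $a\otimes b\leq a\wedge c'$; order-reversal then yields $a\otimes c=a\ominus(a\wedge c')\leq a\ominus(a\otimes b)=a\wedge b'\leq b'$. Because the hypothesis is symmetric in $b$ and $c$, the argument produces the equivalence $a\otimes b\leq c'\Leftrightarrow a\otimes c\leq b'$. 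CI2 then follows by unfolding: $a\otimes c\leq b\Leftrightarrow a\otimes c\leq(b')'\Leftrightarrow a\otimes b'\leq c'\Leftrightarrow c\leq(a\otimes b')'=a\to_s b$.

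The main obstacle is the strong prelinearity law (ii). Setting $p=a\to_s b$, $q=b\to_s a$, $X=p\to_s c$, $Y=q\to_s c$, divisibility gives the sub-bounds $p\otimes X=p\wedge c\leq c$ and $q\otimes Y=q\wedge c\leq c$, while CI2 reduces the target $X\otimes Y\leq c$ to $X\leq Y\to_s c$. To close this, I plan to unfold $p$ and $q$ back into $a'\op(a\wedge b)$ and $b'\op(a\wedge b)$, exploit the monotonicity of $\otimes$ in its second argument (inherited from the anti-monotonicity of $u\mapsto a\wedge u'$), and use the effect-algebra cancellation laws to absorb the ``excess'' of $X$ and $Y$ above $c$ into their Sasaki product. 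This final bookkeeping step is the most delicate calculation, and it is where the bulk of the case analysis will lie.
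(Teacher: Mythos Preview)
The paper does not prove this proposition at all; it simply writes ``See \cite{Foulis2012} for details.'' So there is no argument to compare against, and your proposal already goes well beyond what the paper offers. Your treatment of CI1, the involution, the divisibility identity $a\otimes(a\to_s b)=a\wedge b$, and self-adjointness (iii) is correct and efficient; deriving CI2 from (iii) via the involution is exactly the right move and matches the remark immediately following the proposition in the paper.

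The genuine gap is (ii). What you have written for strong prelinearity is not a proof sketch but a hope: ``unfold $p$ and $q$, use monotonicity of $\otimes$ and cancellation, absorb the excess.'' None of these tools by themselves isolates the key structural fact that makes (ii) work, and the phrase ``case analysis'' is a warning sign, since there are no natural cases here. Concretely, your reduction to $Y\leq X\to_s c$ (note: CI2 gives $X\otimes Y\leq c\Leftrightarrow Y\leq X\to_s c$, not $X\leq Y\to_s c$, so be careful with the non-commutativity of $\otimes$) is fine, but after that you have not identified what relates $p=a\to_s b$ and $q=b\to_s a$ strongly enough to close the inequality. The relevant structural fact is the \emph{prelinearity} identity $(a\to_s b)\vee(b\to_s a)=1$, equivalently $(a\otimes b')\wedge(b\otimes a')=0$: if $x\leq a\ominus(a\wedge b)$ and $x\leq b\ominus(a\wedge b)$ then $x\oplus(a\wedge b)\leq a\wedge b$, forcing $x=0$ by cancellation. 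You should make this explicit and then show how $p\vee q=1$ feeds into the bound on $X\otimes Y$; without it, the ``bookkeeping'' you allude to has no starting point. As it stands, part (ii) of your proposal is incomplete.
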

\noindent See \cite{Foulis2012} for details. 

\begin{proposition} The Sasaki product is a residuated map, and its residual map is the Sasaki arrow.
\end{proposition}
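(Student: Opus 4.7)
The proof plan is to reduce the statement directly to Proposition~\ref{LEAtoCI} together with axiom (CI2) of the definition of CI-lattice. Proposition~\ref{LEAtoCI} already supplies that $(E;\leq,{}',\otimes,\to_s,0,1)$ is an involutive CI-lattice; I then only need to observe that axiom (CI2) is exactly the residuation law for the pair $(\otimes,\to_s)$.

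In more detail: by (CI2), for every $a,b,c\in E$, $a\otimes c\leq b$ if and only if $c\leq a\to_s b$. Fix $a\in E$ and let $A_a:E\to E$ be the unary map $c\mapsto a\otimes c$, and let $B_a:E\to E$ be the unary map $b\mapsto a\to_s b$. The displayed biconditional then reads $A_a(c)\leq b$ iff $c\leq B_a(b)$, which is precisely the definition of residuation recalled in the preliminaries. Hence for each $a$, $A_a$ is a residuated map on $(E;\leq)$ with residual $B_a$, completing the proof.

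Should one wish to avoid invoking Proposition~\ref{LEAtoCI}, a direct computation is possible but technically more delicate. The De Morgan-like identity $(a\otimes c)'=a\to_s c'$, which is immediate from the definitions $a\otimes c=(a'\op(a\wedge c'))'$ and $a\to_s b=a'\op(a\wedge b)$, together with (e7), shows that the target biconditional is self-dual under the substitution $(b,c)\mapsto(c',b')$, so only one implication needs to be established from scratch. The main obstacle in such a direct argument is the partial nature of $\op$: every manipulation of terms like $a'\op(a\wedge b)$ and $a\ominus(a\wedge c')$ forces one to verify the relevant orthogonality conditions, and the absence of orthomodularity in a general LEA means $\wedge$ and $\op$ do not interact as pleasantly as they do on an orthomodular lattice. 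Since Proposition~\ref{LEAtoCI} has already absorbed this bookkeeping into the CI-lattice axioms, the cleanest route is to cite it and appeal to (CI2).
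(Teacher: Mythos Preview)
Your proposal is correct. Both your argument and the paper's rely on Proposition~\ref{LEAtoCI}, but you invoke a different clause of it: you cite the CI-lattice axiom (CI2) directly, which is literally the residuation condition, whereas the paper instead appeals to the self-adjointness law (item (iii) of Proposition~\ref{LEAtoCI}) together with the involution. Concretely, the paper's route is: from $a\to_s b=(a\otimes b')'$ and the order-reversing involution one has $c\leq a\to_s b$ iff $a\otimes b'\leq c'$, and self-adjointness (made into a biconditional by the involution) turns this into $a\otimes c\leq b''=b$. Your route short-circuits this computation, since (CI2) is already packaged into the statement that $(E;\leq,\otimes,\to_s,0,1)$ is a CI-lattice. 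Either way the content is outsourced to \cite{Foulis2012} via Proposition~\ref{LEAtoCI}; your version is arguably the more economical citation.
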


\begin{proof}
This can be easily verified from the self-adjointness law and fact that $'$ is an involution.
\end{proof}

\noindent Sasaki arrow has been shown to provide a suitable notion of implication in a lattice effect algebra. We finish this section by pointing to some properties of Sasaki arrow on lattice effect algebras.

\begin{definition}\cite{Borzooei2018}
Let $(L, \leq)$ be a bounded lattice. A partial binary operation $\Delta$ on $L$ is called a {\it partial t-norm} if it satisfies the following conditions:
\begin{itemize}
\addtolength{\itemindent}{3mm}
\item[(i)] $1 \, \Delta \, a=a$;
\item[(ii)] if $a\, \Delta \, b$ is defined, then $b \,\Delta \, a$ is also defined and $a \, \Delta \, b=b \,\Delta\, a$;
\item[(iii)] if $b\, \Delta \, c$ and $a \, \Delta\, (b\, \Delta \,c)$ are defined, then $a\, \Delta \, b$ and $(a\, \Delta \, b) \, \Delta \, c$ are also defined and $a\, \Delta \,(b\,\Delta\, c)=(a\, \Delta\, b)\, \Delta\, c$;
\item[(iv)] if $a \leq b$, $c \leq d$ and $a\, \Delta\, c$, $b \,\Delta\, d$ are defined then $a \,\Delta\, c\leq b\, \Delta\, d$.
\end{itemize}
\end{definition}
\noindent Notice that the partial binary operation $\od$ defined above is a partial t-norm.
\begin{definition}\cite{Borzooei2018}
Let $L$ be a bounded involutive lattice. We say that a partial binary operation $\rightarrow$ is a \textit{weak partial t-implication} or a \textit{weak pt-implication} if there is a partial t-norm $\Delta$ on $L$ such that it satisfies the following conditions:
\vspace{4mm}

$
\begin{array}{ll}
(E)&a\leq b\ \ \text{iff}\ \ a\rightarrow b=1;\\
(MP^{pt})&a\Delta(a\rightarrow b)\leq b\ \text{when}\ a\Delta(a\rightarrow b)\ \text{is defined};\\
(MT^{pt})& b'\Delta(a\rightarrow b)\leq a'\ \text{when}\ b'\Delta(a\rightarrow b)\ \text{is defined};\\
(NG^{pt})& a\Delta b'\leq(a\rightarrow b)'\ \text{when}\ a\Delta b'\ \text{is defined}.
\end{array}
$
\vspace{4mm}

\noindent A weak pt-implication is a called a \textit{pt-implication} if there is a binary operation $\divideontimes$ on $L$ satisfying:
\vspace{3mm}\\
\hspace*{1cm}$(R)\quad a\divideontimes c\leq b\ \ \text{iff}\ \ c\leq a\rightarrow b$ 
\end{definition}

\begin{definition}
In a lattice effect algebra $E$, $a,b\in E$ are called \textit{compatible} (denoted by $a\leftrightarrow b$) if $a\vee b=a\op(b\ominus(a\wedge b))$. The \textit{compatible center} of $E$ is defined as the set $B(E)=\{x\in E\mid x\leftrightarrow y\ \text{for all}\ y\in E\}$ and the set of \textit{central} elements of $E$ is defined as $C(E)=B(E)\cap S(E)$ where 
$S(E)=\{z\in E\mid z\wedge z'=0\}$ is the set of all sharp elements of $E$.
\end{definition}

\begin{theorem}{\rm\cite{Borzooei2018}}
Let $E$ be a lattice effect algebra, $\rightarrow$ be a pt-implication on $E$. Then $a\rightarrow b=a'\vee b$, for all $a,b\in C(E)$, if and only if $\rightarrow$ is the Sasaki arrow.
\end{theorem}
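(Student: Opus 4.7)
The plan is to prove the biconditional direction by direction.

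For the ``if'' direction, suppose $\rightarrow$ equals the Sasaki arrow $\to_s$, and pick $a,b\in C(E)$. By definition $a\to_s b = a'\op(a\wedge b)$. Sharpness of $a$ gives $a\wedge a'=0$, and hence $a'\le(a\wedge b)'$ by the monotonicity of the orthosupplement (property (e7)), i.e.\ $a'\perp(a\wedge b)$. Using the classical fact that $C(E)$ is a Boolean subalgebra of $E$ on which the partial sum of orthogonal elements coincides with the lattice join, we obtain $a'\op(a\wedge b)=a'\vee(a\wedge b)$, and then distributivity in $C(E)$ yields $(a'\vee a)\wedge(a'\vee b)=1\wedge(a'\vee b)=a'\vee b$, as required.

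For the ``only if'' direction, let $\rightarrow$ be a pt-implication with associated partial t-norm $\Delta$ and residuated product $\divideontimes$, and assume $a\rightarrow b = a'\vee b$ for all $a,b\in C(E)$. Proposition \ref{LEAtoCI} together with the self-adjointness law shows that the Sasaki arrow is itself a pt-implication (with adjoint $\otimes$ and partial t-norm $\od$), and the ``if'' direction just established shows it too satisfies the central identity. Thus $\to_s$ and $\rightarrow$ agree on $C(E)$, and by the residuation property (R) it is equivalent to show that $\divideontimes$ coincides with $\otimes$ throughout $E$.

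To extend the identity from the centre to the whole algebra, I would establish the two inequalities $a'\op(a\wedge b)\le a\rightarrow b$ and $a\rightarrow b \le a'\op(a\wedge b)$ for arbitrary $a,b\in E$. For the first inequality, apply residuation (R) after showing that $a\divideontimes(a'\op(a\wedge b))\le b$, invoking (MP$^{pt}$) and (NG$^{pt}$) together with the orthomodularity-type identity $a\wedge(a'\op(a\wedge b))=a\wedge b$. For the second, use (MT$^{pt}$) to bound $b'\Delta(a\rightarrow b)\le a'$ and rearrange via (E) to recover the Sasaki formula.

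The main obstacle will be that arbitrary $a,b$ are neither sharp nor centrally compatible, so the reduction to the case handled by the hypothesis requires decomposing each operand into its sharp and unsharp components (using the structure theory of lattice effect algebras developed in \cite{Riecanova2000, Riecanova2009}) and then re-assembling the pieces through the adjunction (R). Once this decomposition is in place, the central identity forces the Sasaki formula on each component, and residuation glues the components into the global equality $\rightarrow\,=\,\to_s$. This is the delicate part, and in practice one follows the chain of lemmas in \cite{Borzooei2018} that force a pt-implication to respect orthogonal decompositions.
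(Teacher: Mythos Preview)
The paper does not supply its own proof of this theorem; it is quoted verbatim from \cite{Borzooei2018} and left unproved. So there is nothing in the paper to compare your argument against, and your proposal must stand or fall on its own.

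Your ``if'' direction is essentially correct, modulo a small expositional slip: the orthogonality $a'\perp(a\wedge b)$ follows simply from $a\wedge b\le a$ and (e7), not from sharpness; sharpness is what you need later to get $a'\wedge(a\wedge b)=0$ so that the $\op$ of two central orthogonal elements coincides with their join.

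The ``only if'' direction, however, is not a proof but a wish list. Two concrete problems. First, your plan to show $a\divideontimes(a'\op(a\wedge b))\le b$ by invoking $(MP^{pt})$ and $(NG^{pt})$ conflates the residuated product $\divideontimes$ with the partial t-norm $\Delta$; the axioms $(MP^{pt})$, $(MT^{pt})$, $(NG^{pt})$ constrain $\Delta$, while residuation (R) constrains $\divideontimes$, and nothing in the definition forces these to agree. Second, and more seriously, your sketch for general $a,b\in E$ never actually uses the hypothesis that $\rightarrow$ agrees with $a'\vee b$ on $C(E)$. The only place the hypothesis could enter is through the ``decomposition into sharp and unsharp components'' you allude to, but you give no mechanism by which a constraint on $C(E)$ propagates to all of $E$ via such a decomposition, and you yourself concede that this step is ``the delicate part'' and defer it to \cite{Borzooei2018}. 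That is the entire content of the theorem; without it you have only re-derived the easy inequality $a\rightarrow b\le a\rightarrow_s b$ (which is the next cited theorem in the paper) and left the reverse inequality unproved.
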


\begin{definition}
We say that an arrow operation $\Rightarrow$ on a lattice $L$ is \textit{stricter} than another arrow operation $\rightarrow$ on $L$ if $a\Rightarrow b\leq a\rightarrow b$ for all $a,b\in L$.
\end{definition}

\begin{theorem}{\rm\cite{Borzooei2018}}
Let $E$ be a lattice effect algebra. Then any pt-implication $\rightarrow$ on $E$ is stricter than $\rightarrow_s$.
\end{theorem}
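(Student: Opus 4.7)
The natural approach is to exploit the residuation between the Sasaki product and the Sasaki arrow established in the proposition above: since $a\otimes c\le b$ iff $c\le a\to_s b$, the target inequality $a\to b\le a\to_s b$ is equivalent to $a\otimes(a\to b)\le b$, and this is what I would aim to establish.

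First I would extract global information about $a\to b$ from the pt-implication axioms. Property $(R)$ furnishes a total residual $\divideontimes$ of $\to$ with $a\divideontimes(a\to b)\le b$; setting $c=1$ in $(R)$ and applying $(E)$ gives $a\divideontimes 1=a$, and monotonicity of $a\divideontimes(\cdot)$ in its second argument (itself a consequence of $(R)$) then yields $a\divideontimes c\le a$ for every $c\in E$. Combining these, one obtains $a\divideontimes(a\to b)\le a\wedge b$. Next I would analyse the target $a\otimes(a\to b)$ using the alternative form $a\otimes c=a\odot(a'\vee c)$ of the Sasaki product. Writing $a'\vee(a\to b)=a'\oplus r$ for the unique $r\le a$ with $r=(a'\vee(a\to b))\ominus a'$, a short LEA computation using the identity $(a'\oplus r)'=a\ominus r$ gives $a\odot(a'\oplus r)=r$, so that $a\otimes(a\to b)=r$. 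The task thus reduces to showing $r\le b$.

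The hardest step will be establishing $r\le b$. The natural tools are the pt-implication axioms $(MP^{pt})$ and $(NG^{pt})$ applied to the partial t-norm $\Delta$ witnessing that $\to$ is a (weak) pt-implication, combined with the bound $a\divideontimes(a\to b)\le a\wedge b$ already in hand: $(MP^{pt})$ gives $a\,\Delta\,(a\to b)\le b$ wherever $\Delta$ is defined on the pair, and one would aim to show that the lattice-theoretically specified element $r$ is dominated either by $a\,\Delta\,(a\to b)$ where this makes sense, or by $a\divideontimes(a\to b)$ on the complementary region via a supremum-type argument. The technical heart of the proof is precisely this bridge between the partiality of $\Delta$ and the totality of the Sasaki product $\otimes$, so that the bounds supplied by $(MP^{pt})$ and $(NG^{pt})$ can be transferred to all of $E$ and deliver $r\le b$, thereby $a\otimes(a\to b)\le b$ and hence $a\to b\le a\to_s b$.
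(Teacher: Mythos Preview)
The paper does not supply a proof of this theorem: it is quoted verbatim from \cite{Borzooei2018} and no argument is given. So there is no ``paper's own proof'' to compare your proposal to.

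On the substance of your proposal: the opening reduction is clean and correct. Using the residuation $a\otimes c\le b\Leftrightarrow c\le a\to_s b$, the target $a\to b\le a\to_s b$ is indeed equivalent to $a\otimes(a\to b)\le b$. Your derivation of $a\divideontimes 1=a$ from $(R)$ and $(E)$, the monotonicity of $a\divideontimes(\cdot)$, and the conclusion $a\divideontimes(a\to b)\le a\wedge b$ are all valid. Likewise the effect-algebra computation $a\otimes(a\to b)=r$ with $a'\oplus r=a'\vee(a\to b)$ and $r\le a$ is correct.

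The gap is the final step, which you yourself flag as ``the hardest''. At that point you no longer argue; you only announce an intention to ``transfer'' the bounds from $(MP^{pt})$ and $(NG^{pt})$ via some unspecified supremum-type argument bridging the partial $\Delta$ and the total $\otimes$. Nothing in the axioms links $\Delta$ to either $\divideontimes$ or to the Sasaki product $\otimes$, so it is not clear why a bound of the form $a\,\Delta\,(a\to b)\le b$ (valid only when the left side is defined) should force $r\le b$. The inequality $a\divideontimes(a\to b)\le a\wedge b$ that you did establish concerns $\divideontimes$, not $\otimes$, and you have not related $r=a\otimes(a\to b)$ to $a\divideontimes(a\to b)$. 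In short, the proposal stops precisely where the real work begins; what remains is not a minor detail but the core of the argument, and the hints you give do not indicate a workable route through it.
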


%

\section{Some properties of Lattice Effect Algebra}\label{sec3}

\noindent We will now introduce a weakening of the notion of lattice effect algebras. This will allow us to investigate the duality between the Sasaki product and the Sasaki arrow defined above and to look into which properties of the lattice effect algebras are characterised by this duality. Our result in this section are along the same lines as the classical result showing the equivalence of, on the one hand the duality between Sasaki hook and Sasaki projection and on the other hand the orthomodular law on ortholattices. 

\begin{definition}
A \textit{weak lattice effect algebra} is a bounded involutive lattice $(L; \leq,\op,',0,1)$ in which $a\op b$ is defined if and only if $a\leq b'$ and satisfies the following properties:
\begin{itemize}
\addtolength{\itemindent}{5mm}
\item[(W1)] If $a \op b$ is defined then  $a\op b=b\op a$;
\item[(W2)] If $b \op c$ and $a \op (b\op c)$ are defined then so are $a \op b$ and  $(a\op b)\op c$, and we have $a\op(b\op c)=(a\op b)\op c$;
\item[(W3)] For every $a\in L$, $a\op 0=a$.
\end{itemize}
\end{definition}
In a weak lattice effect algebra $L$, we write $a \bot b$ when $a \op b$ is defined. We now give a couple of examples of weak lattice effect algebras, which are not lattice effect algebras.

\begin{example}
Let $L=\{0,a,b,a',b'\}$ and define a partial operation $\op$ such that $a\op b=a\op a'=b\op b'=1$. Then $(L; \leq,\op,',0,1)$ is a weak lattice effect algebra and the induced order is visualized in Fig 1-a.
\end{example}

\begin{example}
Let $L=\{0,a,b,c,a',b',c'\}$ and define a partial operation $\op$ such that $a\op a'=c$, $a\op b=b\op b'=c\op c'=1$, $a\op c'=a$, $a'\op c'=a'$ and $c'\op c'=c$. Then $(L; \leq,\op,',0,1)$ is a weak lattice effect algebra and the induced order is visualized in Fig 1-b.
\end{example}

\begin{figure}[h]
\begin{center}
\begin{tikzpicture}
 \matrix[row sep=5mm,column sep=5mm] {
  & \node (1) {$1$}; & \\
  \node (b1) {$b'$}; & & \node (a1) {$a'$};\\
  \node (a) {$a$}; & & \node (b) {$b$};\\
  & \node (0) {$0$}; & \\
};
\graph {
  (b1)--(1); (a1) -- (1);
  (a)--(b1); (b)--(a1);
  (0)--(a); (0)--(b);
};
\end{tikzpicture} Fig. 1-a.
\hspace{30mm}
\begin{tikzpicture}
 \matrix[row sep=5mm,column sep=5mm] {
  & \node (1) {$1$}; \\
  \node (b1) {$b'$}; & \node (c) {$c$};\\
  \node (a) {$a$}; & \node (a1) {$a'$};\\
  \node (c1) {$c'$}; & \node (b) {$b$};\\
  \node (0) {$0$}; & \\
};
\graph {
  (0)--(c1); (c1) -- (a); (a)--(b1);
  (0)--(b); (c1)--(a1); (a)--(c); (b1)--(1);
  (b)--(a1); (a1)--(c); (c)--(1);
};
\end{tikzpicture} Fig. 1-b.
\end{center}

\end{figure}

\begin{figure}[!hbtp]

 
\end{figure}

\begin{remark}
In proof of the following proposition and theorem we should notice that for a bounded involutive lattice we have $0'=1$.
\end{remark}

\begin{proposition}
Let $(L; \leq,\op,',0,1)$ be a weak lattice effect algebra. Then
\begin{itemize}
\addtolength{\itemindent}{5mm}
\item[{\rm(W4)}] If $a\perp 1$ then $a=0$;
\item[{\rm(W5)}] If $a\perp b$ and $a\op b=c$ then we have 
\begin{itemize}
\item[{\rm(i)}] $a, b\leq c$,
\item[{\rm(ii)}] $a\op c'\leq b'$ and $b\op c'\leq a'$;
\end{itemize}\item[{\rm(W6)}] If $a\op a'=r$ then $r'\leq a\leq r$;
\item[{\rm(W7)}] $a\leq b$ and $a\op b'\neq 1$ if and only if there is $c\in E$ such that $c \neq 0$ and $a\op c\leq b$;
\item[{\rm(W8)}] $a\op c\leq b$ if and only if $b'\op c\leq a'$.
\end{itemize}
\end{proposition}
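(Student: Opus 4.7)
The plan is to extract every item from axiom (W2), exploiting the fact that $c'\oplus c$ is always defined (the domain condition $c'\leq c'$ being trivial). Axiom (W2) is a one-directional associativity whose hypothesis is a pair of definedness claims and whose conclusion produces further definedness conclusions; under the standing convention that ``$x\oplus y$ is defined'' means $x\leq y'$, these conclusions are exactly the inequalities we want.

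(W4) is immediate from the domain condition: $a\perp 1$ means $a\leq 1'=0$. The main step is (W5)(i). Given $a\oplus b=c$, I apply (W2) to the triple $(c',a,b)$. Its two hypotheses become ``$a\oplus b$ defined'' (given) and ``$c'\oplus(a\oplus b)=c'\oplus c$ defined'' (trivial). The conclusion supplies ``$c'\oplus a$ defined'', which is $c'\leq a'$, i.e.\ $a\leq c$; (W1) then gives $b\leq c$ by the symmetric argument. The very same application of (W2) also delivers ``$(c'\oplus a)\oplus b$ defined'', which unwinds to $a\oplus c'\leq b'$, and interchanging $a$ with $b$ gives $b\oplus c'\leq a'$. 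So (W5)(ii) drops out of the same application.

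The remaining items are now straightforward consequences. (W6) is (W5)(i) applied with $b=a'$: from $a,a'\leq r$ the involution yields $r'\leq a$. For (W7), the forward direction takes $c:=(a\oplus b')'$; the sum $a\oplus b'$ exists because $a\leq b=(b')'$, the condition $c\neq 0$ is equivalent to $a\oplus b'\neq 1$, and (W5)(ii) applied to $a\oplus b'$ gives $a\oplus c\leq b$. The backward direction uses (W5)(i) for $a\leq a\oplus c\leq b$, and rules out $a\oplus b'=1$ by applying (W2) to the triple $(b',a,c)$: its two hypotheses are ``$a\oplus c$ defined'' (given) and ``$b'\oplus(a\oplus c)$ defined'' (which reduces to $a\oplus c\leq b$), and the conclusion forces $(b'\oplus a)\oplus c=1\oplus c$ to be defined, whence (W4) gives $c=0$, contradicting $c\neq 0$. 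Finally, (W8) is one further application of the same pattern: (W2) on the triple $(b',c,a)$ uses ``$a\oplus c$ defined'' and ``$b'\oplus(a\oplus c)$ defined'' (again available because $a\oplus c\leq b$) to produce ``$(b'\oplus c)\oplus a$ defined'', i.e.\ $b'\oplus c\leq a'$; the converse direction is the involutive symmetry $a\mapsto b'$, $b\mapsto a'$ combined with $a''=a$.

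The main obstacle --- and really the only genuine idea in the proof --- is (W5)(i). There is no monotonicity axiom for $\oplus$, and the order is inherited from the lattice rather than induced by $\oplus$ as in a genuine effect algebra, so the familiar effect-algebra one-liner ``$a=a\oplus 0\leq a\oplus b$'' is not available. The trick of feeding the triple $(c',a,b)$ into (W2) and exploiting the free definedness of $c'\oplus c$ turns the associativity axiom into a surrogate monotonicity lemma; once this is in place, the other items unfold routinely.
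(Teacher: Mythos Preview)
Your proof is correct and follows essentially the same approach as the paper's. In both, the crux is the observation that $c'\oplus c$ is always defined, so feeding the triple $(c',a,b)$ (and its permutations) into (W2) converts the associativity axiom into the needed order inequalities; the remaining items (W6)--(W8) then unwind from (W5) exactly as you describe. Your treatment of the backward direction of (W7) --- assuming $a\oplus b'=1$ and deriving $c=0$ via (W4) --- is a slight repackaging of the paper's argument (which instead observes $a\oplus b'\leq c'\neq 1$), but the underlying (W2)-application is identical.
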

\begin{proof}
\noindent {\bf(W4):} From the definition, $a\op 1$ is defined iff $a\leq 1'$ which means $a\leq 0$ and so $a=0$.\\

\noindent {\bf(W5):} Since $c\leq c$ we have $c\perp c'$ and so from $a\op b=c$ we get $(a\op b)\perp c'$. Hence from (W1) and (W2) we obtain $a\perp c'$, $b\perp c'$, $a\op c'\perp b$ and $b\op c'\perp a$ which imply the results.\\

\noindent {\bf(W6):} Using (W5), from $a\op a'=r$, we get $a,a'\leq r$ and therefore $r'\leq a\leq r$.\\

\noindent {\bf(W7):} Let $a\leq b$ and $a\op b'\neq 1$. Then there is $1\neq r\in L$ such that $a\op b'=r$ and so $a\op r'\leq b$ follows from (W5) in which $r'\neq 0$ is the desired $c$. Conversely assume that there is $0\neq c\in L$ such that $a\op c\leq b$. From (W5) we know $a\leq a\op c$ and $a\op b'\leq c'$. Hence using transitivity we get $a\leq b$ and since $c'\neq 1$ we get $a\op b'\neq 1$.\\

\noindent {\bf(W8):} Let $a\op c\leq b$. Then $a\op c\perp b'$ and from (W1) and (W2) we have $b'\op c\perp a$ which means $b'\op c\leq a'$. Similarly the converse is true.
\end{proof}

\begin{theorem}\label{RD}
Let $(L; \leq,\op,',0,1)$ be a weak lattice effect algebra. Then the following are equivalent:
\begin{itemize}
\addtolength{\itemindent}{5mm}
\item[{\rm(RD1)}] $(L;\op,0,1)$ is a lattice effect algebra where $a'$ is the orthosuplement of $a\in L$.
\item[{\rm(RD2)}] $a\otimes x\leq b$ if and only if $x\leq a\rightarrow_s b$.
\item[{\rm(RD3)}] $b'\leq a'\op (a\wedge x')$ if and only if $x\leq a'\op(a\wedge b)$.
\item[{\rm(RD4)}] $a\leq b$ if and only if $a\rightarrow_s b=1$.
\item[{\rm(RD5)}] For every $a\in L$, $a\op b=1$ if and only if $b=a'$.
\item[{\rm(RD6)}] If $a\op b=c$ then $a\op c'=b'$.
\item[{\rm(RD7)}] If $a\leq b$ then $a\op(a'\od b)=b$.
\item[{\rm(RD8)}] $a\leq b$ if and only if there is a unique $c\in L$ such that $a\op c=b$.
\end{itemize}
\end{theorem}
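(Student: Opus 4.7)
My plan is to establish the eight conditions as equivalent via the cycle
\begin{equation*}
(\text{RD1})\Rightarrow(\text{RD2})\Leftrightarrow(\text{RD3})\Rightarrow(\text{RD4})\Rightarrow(\text{RD5})\Rightarrow(\text{RD6})\Rightarrow(\text{RD7})\Rightarrow(\text{RD8})\Rightarrow(\text{RD1}).
\end{equation*}
The opening leg is routine: (RD1)$\Rightarrow$(RD2) follows from Proposition~\ref{LEAtoCI} together with the subsequent proposition on residuation of the Sasaki product; (RD2)$\Leftrightarrow$(RD3) is the definitional unfolding of $a\otimes x=(a'\oplus(a\wedge x'))'$ and $a\rightarrow_s b=a'\oplus(a\wedge b)$ together with involutivity; and (RD2)$\Rightarrow$(RD4) comes from substituting $x=1$ and noting $a\otimes 1=a$.

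For (RD4)$\Rightarrow$(RD5) I first instantiate (RD4) at $a\leq a$ to obtain $a\rightarrow_s a=a'\oplus a=1$, giving existence $a\oplus a'=1$; for uniqueness, if $a\oplus b=1$ then $b\leq a'$ by the definition of $\oplus$ in a weak LEA, whence $a'\wedge b=b$, so $a'\rightarrow_s b=a\oplus b=1$ and (RD4) returns $a'\leq b$, forcing $b=a'$. Next, (RD5)$\Rightarrow$(RD6) is a rearrangement: from $a\oplus b=c$ and $c\oplus c'=1$, (W1) and (W2) produce $(a\oplus c')\oplus b=1$, and (RD5) then gives $b=(a\oplus c')'$, i.e.\ $a\oplus c'=b'$. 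For (RD6)$\Rightarrow$(RD7), given $a\leq b$ I set $d=a\oplus b'$; applying (RD6) to this equation yields $a\oplus d'=b$, where $d'=(a\oplus b')'=a'\odot b$ is the desired witness.

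The implication (RD7)$\Rightarrow$(RD8) goes in two steps. Existence of the witness in (RD8) is supplied directly by (RD7) with $c=a'\odot b$. For uniqueness I first derive (RD5) from (RD7): take $b=1$ in (RD7) (using $a'\odot 1=a'$) to obtain $a\oplus a'=1$; and if $a\oplus b=1$ then $a\leq b'$, so (RD7) applied to $a\leq b'$ gives $a\oplus(a'\odot b')=b'$ with $a'\odot b'=(a\oplus b)'=0$, forcing $a=b'$ and hence $b=a'$. Once (RD5) holds, the weak LEA becomes a lattice effect algebra, so the cancellation law (e8) supplies uniqueness in (RD8).

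The main obstacle is (RD8)$\Rightarrow$(RD1); my strategy is to prove (RD8)$\Rightarrow$(RD7) and then invoke the chain above. The crux is extracting monotonicity of $\oplus$ from (RD8): if $x\leq y$ with both $a\oplus x$ and $a\oplus y$ defined, then (RD8) produces $d$ with $x\oplus d=y$, and (W2) gives $(a\oplus x)\oplus d=a\oplus y$, so (W5) yields $a\oplus x\leq a\oplus y$. Now for $a\leq b$ let $c^{*}=a'\odot b=(a\oplus b')'$: property (W5)(ii) applied to $a\oplus b'$ shows $a\oplus c^{*}\leq b$; meanwhile, applying (W5)(ii) to $a\oplus c=b$ (where $c$ is the unique witness supplied by (RD8)) shows $c\leq c^{*}$, and monotonicity then forces $b=a\oplus c\leq a\oplus c^{*}$. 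Hence $a\oplus c^{*}=b$, establishing (RD7) and closing the cycle. Throughout, the recurring difficulty is the absence of cancellation in a weak LEA, which is why so many of these arguments must be routed through the directly-derived (RD5) rather than through standard effect-algebra manipulations.
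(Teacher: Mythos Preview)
Your argument is correct, and for (RD1)$\Rightarrow$(RD2)$\Leftrightarrow$(RD3)$\Rightarrow$(RD4)$\Rightarrow$(RD5)$\Rightarrow$(RD6)$\Rightarrow$(RD7) it matches the paper almost verbatim. The two places where you diverge are (RD7)$\Rightarrow$(RD8) and (RD8)$\Rightarrow$(RD1).

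For (RD7)$\Rightarrow$(RD8), the paper first records the easy equivalence (RD6)$\Leftrightarrow$(RD7) and then handles uniqueness directly via (RD6): if $a\oplus c_1=b=a\oplus c_2$ then (RD6) gives $a\oplus b'=c_1'=c_2'$, so $c_1=c_2$. Your route---deriving (RD5) from (RD7) and then appealing to cancellation (e8) in the resulting effect algebra---also works, but is a longer path to the same conclusion.

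For (RD8)$\Rightarrow$(RD1) the contrast is sharper. The paper simply instantiates (RD8) at $b=1$: since $a\leq 1$, there is a unique $c$ with $a\oplus c=1$, which is exactly (E3); and (E4) is (W4). That is the whole argument. Your proof instead establishes (RD8)$\Rightarrow$(RD7) via a monotonicity lemma for $\oplus$, and then falls back on (RD7)$\Rightarrow$(RD5)$\Rightarrow$(RD1). This is valid---your monotonicity argument and the comparison $c\leq c^{*}$ via (W5)(ii) are both fine---but it is considerably more work than the paper's two-line verification, and it routes through the somewhat delicate claim ``once (RD5) holds, the weak LEA becomes a lattice effect algebra'' (true, but it silently uses (RD5)$\Rightarrow$(RD6)$\Rightarrow$(RD7) to identify the two orders). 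The payoff of your approach is that you explicitly exhibit the witness $c=a'\odot b$ and confirm it coincides with the unique witness of (RD8); the paper's approach buys brevity.
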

\begin{proof} 
\noindent (RD1)$\Rightarrow$(RD2) Follows from \cite{Foulis2012}.\\
\noindent (RD2)$\Rightarrow$(RD3) It is clear from definition of Sasaki product and Sasaki arrow and using the fact that $L$ is an involutive lattice.\\
\noindent (RD3)$\Rightarrow$(RD4) Let $a\leq b$. Then $b'\leq a'$ and by (W3) we have $b'\leq a'\op(a\wedge 0)$. Using (RD3) we have $1\leq a'\op(a\wedge b)$ and so $a\rightarrow_s b=1$. The converse follows from the backward argument.\\
\noindent (RD4)$\Rightarrow$(RD5) Let $a\in L$. Since $a\leq a$ then by (RD4) we have $a\rightarrow_s a=1$. Thus $a'\op(a\wedge a)=1$ which means $a\op a'=1$. On the other hand for $a\in L$ assume that $a\op b=1$. Then $a\perp b$ and we have $ b \leq a'$. Next $a' \to_{s} b=  a \op (a' \wedge b)= a \op b = 1$ by assumption. Thus by (RD4) we have $a' \leq b$. Thus we have $a' \leq b$ and $b \leq a'$ and hence $b= a'$.\\
\noindent (RD5)$\Rightarrow$(RD6) Let $a\op b=c$. Since from (RD5) we have $c\op c'=1$ and $(a\op b)\op c'=1$. By (W2), we have $a\op(b\op c')=1$ and using (RD5) again, it follows that $a\op c'=b'$.\\
\noindent (RD6)$\Rightarrow$(RD7) Let $a\leq b$. Then $a\op b'$ is defined and we have $a\op b'=a\op b'$. Thus using (RD6) we have $a\op(a\op b')'=b$ and thus $a\op(a'\od b)=b$.\\
\noindent (RD7)$\Rightarrow$(RD6) Let $a\op b=c$. Since $a\op b$ is defined then $a\leq b'$. Hence using (RD7) we obtain $a\op(a\op b)'=b'$ which means $a\op c'=b'$.\\
\noindent (RD7)$\Rightarrow$(RD8) Since (RD6) and (RD7) are equivalent, we freely use (RD6). Assume that there is $c\in L$ such that $a\op c=b$ then, from (RD7), we have $a\op b'=c'$ and so $a\op b'$ is defined which means $a\leq b$. Conversely, suppose that $a\leq b$ then we can get $c=a'\od b$. Moreover, using (RD7), we have $a\op b'=(a'\od b)'$ and so $c$ is unique element such that $a\op c=b$, for $a,b\in L$.\\
\noindent (RD8)$\Rightarrow$(RD1) Since $a\leq 1$, from (RD8), there is a unique $b$ such that $a\op b=1$. Moreover, if $a\op 1$ is defined then $a\leq 1'=0$ and so $a=0$. Therefore, $L$ satisfies all properties of a lattice effect algebra.
\end{proof}

\begin{remark}
The relation between lattice effect algebras and weak lattice effect algebras is analogues to that of orthomodular lattices and ortholattices. As the previous result shows, this is witnessed by the fact that adding the residuation law of Sasaki product and Sasaki arrow to a weak lattice effect algebra will make it into a lattice effect algebra.
\end{remark}

\begin{remark}
Obviously, if every element of a weak lattice effect algebra is sharp then it is an ortholattice. Furthermore, if we define the partial binary operation $\op$ on an ortholattice $(L; \leq,',0,1)$ by setting $a\op b=a\vee b$ for all $a,b \in L$ with $a\leq b'$, then $(L;\leq,\op,',0,1)$ is a weak lattice effect algebra in which every element is sharp and $a\op a'=1$, for all $a\in L$. Indeed, to show this, all one has to check is (W2). To see this let $b \perp c$ and $a\perp(b\op c)$. Then $b\leq c'$ and $a\leq(b\vee c)'$ so it is easy to see that $b\leq a'$ and $a\vee b\leq c'$. Thus $a\perp b$, $(a\op b)\perp c$ and $a\op(b\op c)=(a\op b)\op c=a\vee b\vee c$ as required.
\end{remark}

\noindent Proposition \ref{LEAtoCI} established how to make an involutive CI-lattice from a lattice effect algebra. In the next results we establish the other direction and show the conditions under which a lattice effect algebra can be constructed from an involutive CI-lattice.
\begin{theorem}\label{invlatweak}
Let $(L; \leq,\cdot,\rightarrow,',0,1)$ be a system in which $(L; \leq,',0,1)$ is a bounded involutive lattice and satisfies the following properties:
\begin{itemize}
\addtolength{\itemindent}{5mm}
\item[{\rm(cw1)}] $a\cdot b=(a\rightarrow b')'$;
\item[{\rm(cw2)}] If $a\leq b'$ then $a'\rightarrow b=b'\rightarrow a$ and $a\leq a'\rightarrow b$;
\item[{\rm(cw3)}] If $a\leq b'$ and $a\leq c'$ then $a'\rightarrow b\leq c'$ implies $a'\rightarrow c\leq b'$;
\item[{\rm(cw4)}] If $b'\leq c$ and $a'\leq b\cdot c$ then $a\cdot(b\cdot c)=(a\cdot b)\cdot c$;
\item[{\rm(cw5)}] $a\cdot 1=a$.
\end{itemize}
Then $L$ can be arranged into a weak lattice effect algebra $(L; \leq,\op,',0,1)$  by setting
$$a\op b=a'\rightarrow b \,\,\rm{ whenever} \,\, a\leq b' \,\,\rm{ and \,\, undefined\,\, otherwise.} \,\,$$
\end{theorem}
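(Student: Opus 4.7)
The plan is to verify the axioms (W1), (W2), (W3) for the partial operation $\op$ defined by $a \op b = a' \rightarrow b$ on pairs with $a \leq b'$. A key translation, used throughout, is that (cw1) rewrites $\op$ in multiplicative form: $a \op b = (a' \cdot b')'$ whenever $a \leq b'$. First note that the domain of $\op$ is symmetric, since involution gives $a \leq b'$ iff $b \leq a'$, and (cw2) supplies the matching symmetry of values, $a' \rightarrow b = b' \rightarrow a$, which is (W1). For (W3), since $a \leq 0' = 1$ always holds, $a \op 0 = a' \rightarrow 0$; applying (cw1) to $a' \cdot 1$ gives $a' \cdot 1 = (a' \rightarrow 0)'$, and (cw5) reduces the left side to $a'$, so $a' \rightarrow 0 = a$ by involution.

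The substantive work lies in (W2). Suppose $b \perp c$ (so $b \leq c'$) and $a \perp (b \op c)$ (so $a \leq (b \op c)' = b' \cdot c'$). I would split the verification into three sub-claims: (i) $a \op b$ is defined, (ii) $(a \op b) \op c$ is defined, and (iii) the associativity identity holds. For (i), (cw1) gives $b' \cdot c' = (b' \rightarrow c)'$, and (cw2) applied to $b \leq c'$ gives $b \leq b' \rightarrow c$, hence $(b' \rightarrow c)' \leq b'$, so $a \leq b' \cdot c' \leq b'$; a symmetric argument also yields $a \leq c'$ as a useful by-product. For (iii), apply (cw4) with $A = a'$, $B = b'$, $C = c'$: its hypotheses $B' \leq C$ and $A' \leq B \cdot C$ become exactly $b \leq c'$ and $a \leq b' \cdot c'$, so $a' \cdot (b' \cdot c') = (a' \cdot b') \cdot c'$, and taking involutions of both sides yields $a \op (b \op c) = (a \op b) \op c$.

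The hard step is (ii), which requires $a \op b \leq c'$, equivalently $a' \rightarrow b \leq c'$. The plan is to apply (cw3) twice, using (cw2) in between to swap arrow arguments. From $a \leq b' \cdot c' = (b' \rightarrow c)' = (c' \rightarrow b)'$ (the second equality by (cw2) with $b \leq c'$) one gets $c' \rightarrow b \leq a'$. Then (cw3) applied with $x = c$, $y = b$, $z = a$, whose hypotheses $c \leq b'$ and $c \leq a'$ follow from $b \leq c'$ and $a \leq c'$, yields $c' \rightarrow a \leq b'$; next, (cw2) rewrites $c' \rightarrow a$ as $a' \rightarrow c$, and a second application of (cw3) with $x = a$, $y = c$, $z = b$ delivers $a' \rightarrow b \leq c'$, as needed. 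This double application of (cw3), chained through (cw2), is the main obstacle: it secures the domain condition for $(a\op b)\op c$ and is the only step that is not essentially a mechanical translation via (cw1), after which (cw4) supplies the multiplicative equality for (iii).
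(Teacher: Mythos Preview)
Your proof is correct and follows essentially the same route as the paper: translate $\op$ into multiplicative form via (cw1), use (cw2) for (W1), (cw5) for (W3), and split (W2) into the same three subclaims (i)--(iii), with (cw4) giving the associativity identity. The only noteworthy difference is in subclaim (ii): you chain two applications of (cw3) through (cw2) to reach $a'\rightarrow b\leq c'$ directly, whereas the paper does it in one shot. Since you already know $b'\rightarrow c\leq a'$ (this is just $b\op c\leq a'$) and have established $b\leq a'$ and $b\leq c'$, a single application of (cw3) with base element $b$ gives $b'\rightarrow a\leq c'$; that is $b\op a\leq c'$, and (W1) turns this into $a\op b\leq c'$. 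So what you flagged as ``the main obstacle'' can be shortened, but your longer argument is perfectly valid.
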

\begin{proof}
We need to show (W1), (W2) and (W3). First notice that if $a \op b$ is defined then by (cw1) $a \op b= (a' \cdot b')'$. For (W1) assume $a \op b$ is defined and thus by definition $a \leq b'$ then using (cw2)  we have $a'\rightarrow b=b'\rightarrow a$ and thus by definition $a \op b= b \op a$ . For (W2) assume $b \op c$ and $a \op (b \op c)$ are dfined, then by definition $b \leq c'$ (and $c \leq b'$) and $a \leq (b \op c)'$ (and $b \op c \leq a'$).  First we show that $a \op b$ and $(a \op b) \op c$ are defined. To see this notice that by the second part of (cw2) from $b\leq c'$ we have $b\leq b'\rightarrow c$, and $b\op c\leq a'$ is, by definition, $b'\rightarrow c\leq a'$. Thus by transitivity $b\leq a'$. Thus $a \leq b'$ and $a \op b$ is defiend.  Next, since $b \leq a'$ and also $b \leq c'$, by (cw3), $b'\rightarrow a\leq c'$. That is $b \op a \leq c'$ and hence $(b\op a) \op c$ is defined. Using (W1),  we get $(a \op b) \op c$ is defined as required. To show that $a \op (b \op c)= (a \op b) \op c$ notice that from $a \leq (b \op c)'$ and the fact that $b \op c= (b' \cdot c')'$ and $(a')'=a$ we get $(a')' \leq b' \cdot c'$. Similarly from $b \leq c'$ we get $(b')' \leq c'$. Then using (cw4) we get $a'\cdot (b'\cdot c')=(a'\cdot b')\cdot c'$ and so $(a'\cdot((b'\cdot c')')')'=(((a'\cdot b')')'\cdot c')'$. Simplifying both sides by applying (cw1) twice we get, $a' \rightarrow (b' \rightarrow c) = (a' \rightarrow b)' \rightarrow c$ which, by definition is $a \op (b \op c) = (a \op b) \op c$ as required. Finally, $(W3)$ follows from (cw5) as $a\op 0=(a'\cdot 1)'=(a')'=a$.
\end{proof}
\begin{corollary}
Let $(L;\leq,\cdot,\rightarrow,0,1)$ be an involutive CI-lattice that satisfies {\rm(cw1)}, {\rm(cw2)}, {\rm(cw3)} and {\rm(cw4)}. Then $L$ can be arranged into a lattice effect algbera $(L;\op,0,1)$ by setting $a'=a\rightarrow 0$, for all $a \in L$, $a\op b=a'\rightarrow b$ for all $a,b \in L$ with $a\leq b'$ and $a\rightarrow_s b=a\rightarrow b$.
\end{corollary}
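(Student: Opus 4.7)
The plan is to reduce the corollary to a two-step application of the preceding results: first invoke Theorem \ref{invlatweak} to obtain a weak lattice effect algebra, then use the equivalence (RD1)$\Leftrightarrow$(RD4) in Theorem \ref{RD} to promote it to a lattice effect algebra. The identification $a\rightarrow_s b=a\rightarrow b$ will serve as the bridge between the two steps.

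First I would check that the hypotheses of Theorem \ref{invlatweak} are met. The axioms (cw1)--(cw4) are given, and the remaining (cw5), namely $a\cdot 1=a$, is automatic for any CI-lattice since it is part of (CI1). Moreover $a'=a\rightarrow 0$ by definition of an involutive CI-lattice. Hence Theorem \ref{invlatweak} gives a weak lattice effect algebra structure on $L$ with $a\op b=a'\rightarrow b$ whenever $a\leq b'$.

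Next I would show that the Sasaki arrow $\rightarrow_s$ built from this $\op$ coincides with the original $\rightarrow$. By definition $a\rightarrow_s b=a'\op(a\wedge b)$, and since $a\wedge b\leq a$ we have $a'\leq (a\wedge b)'$, so the sum is defined and equals $(a')'\rightarrow(a\wedge b)=a\rightarrow(a\wedge b)$. Applying (CI6) gives $a\rightarrow(a\wedge b)=(a\rightarrow a)\wedge(a\rightarrow b)$, and using (CI5) with $a\leq a$ yields $a\rightarrow a=1$, whence $a\rightarrow_s b=a\rightarrow b$.

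Finally, combining this with (CI5), I would deduce $a\leq b \iff a\rightarrow b=1 \iff a\rightarrow_s b=1$, which is exactly (RD4) of Theorem \ref{RD}. Since (RD4) implies (RD1), the weak lattice effect algebra $L$ is in fact a lattice effect algebra, completing the proof. The main obstacle is the small computation identifying $\rightarrow_s$ with $\rightarrow$; everything else is invocation of the earlier results.
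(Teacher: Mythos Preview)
Your proposal is correct and follows essentially the same route as the paper: obtain (cw5) from (CI1) to invoke Theorem \ref{invlatweak}, identify $\rightarrow_s$ with $\rightarrow$ via (CI5) and (CI6), and then conclude via (RD4)$\Rightarrow$(RD1) in Theorem \ref{RD}. The paper's proof is more terse but the logical structure is identical.
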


\begin{proof}
First we should notice that from (CI1) we obtain (cw5) and so it is obtained that $(L;\leq,\op,',0,1)$ is a weak lattice effect algebra. From the fact that $a\rightarrow_s b=a'\op(a\wedge b)=a\rightarrow(a\wedge b)$, using (CI5) and (CI6), we conclude $a\rightarrow_s b=a\rightarrow b$ and  $a\leq b$ if and only if $a\rightarrow_s b=1$. The results will hence follow from Theorem \ref{RD}.
\end{proof}

\section{Logic of lattice effect algebra (LEL)} \label{sec4}
\noindent Let $P$ be a set of atomic propositions and $p\in P$. The set of formulas of LEL is inductively defined as:
$$\phi::=\ \ p\mid\perp\mid \phi\rightarrow \phi$$
\noindent In this setting we can define the standard logical connectives as
\begin{itemize}
\item Negation: $\neg \phi= \phi\rightarrow\perp$, 
\item Conjunction: $\phi\wedgedot \psi=\neg(\phi\rightarrow\neg \psi)$, 
\item Disjunction: $\phi\veedot \psi=\neg(\neg \phi\wedgedot\neg \psi)$ 
\item $\top=\neg\bot$.
\end{itemize}
 
\noindent With this definitions the connectives of our logic corresponds to those proposed in \cite{Foulis2012}.

\subsection{Semantics of LEL}
The semantics of the LEL will be given in a lattice effect algebra.

\begin{definition}
Let $E$ be a lattice effect algebra. A valuation is a function $V:P\rightarrow E$ that assigns to each atomic proposition an element of $E$ and extends to all formulas as follows
\begin{align*}
&V(\bot)=0; &&V(\top)=1; && V(\neg \phi)=V(\phi)';\\
&V(\phi \rightarrow \psi)&=&(V(\phi)\otimes V(\psi)')'&=& V(\phi)\rightarrow_s V(\psi);&\\
&V(\phi \wedgedot \psi)&=& V(\phi)\otimes V(\psi)&=& (V(\phi)\rightarrow_s V(\psi)')';&\\
&V(\phi \veedot \psi)&=& (V(\phi)'\otimes V(\psi)')'&=& V(\phi)'\rightarrow_s V(\psi).&\\
\end{align*}

\noindent Then we say that $\mathcal{E}=\langle E, V\rangle$ is a model of LEL.
\end{definition}

 \noindent We can also define two other logical connectives in our language to correspond to lattice operations of $E$ as $\phi \wedge \psi= \phi \wedgedot(\phi \rightarrow \psi)$ and $\phi \vee \psi=\neg(\neg \phi \wedge\neg \psi)$.

\begin{lemma}\label{SAwedge}
Let $E$ be a lattice effect algebra and $a,b,c\in E$. Then
$$a\wedge b=a\otimes(a\rightarrow_s b)=b\otimes(b\rightarrow_s a)=(a\rightarrow_s(a\rightarrow_s b)')'=(b\rightarrow_s(b\rightarrow_s a)')'$$
\end{lemma}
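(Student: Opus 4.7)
The plan is to establish $a\wedge b = a\otimes(a\rightarrow_s b)$ first, and then derive the other three equalities either by the symmetry of $\wedge$ or by purely algebraic manipulation of the definitions.

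For the core equality $a\wedge b = a\otimes(a\rightarrow_s b)$, I would use a double-inequality argument. The $\leq$ direction follows immediately from the CI-lattice properties stated in Section~\ref{sec2}: by (CI3), $a\otimes(a\rightarrow_s b)\leq a$, and by (CI4), $a\otimes(a\rightarrow_s b)\leq b$; together these yield $a\otimes(a\rightarrow_s b)\leq a\wedge b$. For the $\geq$ direction, I would apply the divisibility law from Proposition~\ref{LEAtoCI}(i) with $c = a\wedge b$, since clearly $a\wedge b \leq a$ and $a\wedge b\leq b$, which gives $a\wedge b\leq a\otimes(a\rightarrow_s b)$.

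The equality $a\wedge b = b\otimes(b\rightarrow_s a)$ then follows at once because $\wedge$ is commutative: $a\wedge b = b\wedge a = b\otimes(b\rightarrow_s a)$ by the same argument with $a$ and $b$ swapped.

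For the last two equalities, I would simply unfold the definition of Sasaki arrow. Observe that for any $x$, $(a\rightarrow_s x)' = ((a\otimes x')')' = a\otimes x'$ by involutivity of $'$. Applying this with $x = (a\rightarrow_s b)'$ gives
\[
(a\rightarrow_s (a\rightarrow_s b)')' \;=\; a\otimes \bigl((a\rightarrow_s b)'\bigr)' \;=\; a\otimes (a\rightarrow_s b),
\]
which by the first part equals $a\wedge b$. The final expression $(b\rightarrow_s(b\rightarrow_s a)')' = b\otimes(b\rightarrow_s a) = a\wedge b$ follows identically. There is no real obstacle here: the argument is a direct combination of the CI-lattice inequalities (CI3), (CI4) and the divisibility law already recorded in Proposition~\ref{LEAtoCI}, together with the definitional duality between $\otimes$ and $\rightarrow_s$.
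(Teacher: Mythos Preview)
Your proposal is correct and follows essentially the same route as the paper: both use (CI3) and (CI4) to get $a\otimes(a\rightarrow_s b)\leq a\wedge b$, invoke the divisibility law of Proposition~\ref{LEAtoCI}(i) for the reverse inequality, obtain the $b$-version by symmetry of $\wedge$, and then derive the remaining two expressions from the definitional duality $a\rightarrow_s x=(a\otimes x')'$. Your write-up is in fact slightly more explicit than the paper's, but there is no substantive difference in strategy.
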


\begin{proof}
From (CI3), we have $a\otimes(a\rightarrow_s b)\leq a$ and from (CI4), we have $a\otimes(a\rightarrow_s b)\leq b$. Hence $a\otimes(a\rightarrow_s b)\leq a\wedge b$. Now, using the divisibility laws of the lattice effect algebras in Proposition \ref{LEAtoCI}, we obtain $a\wedge b=a\otimes(a\rightarrow_s b)=b\otimes(b\rightarrow_s a)$. Then, from the duality of $\otimes$ and $\rightarrow_s$, we have $a\wedge b= (a\rightarrow_s(a\rightarrow_s b)')'=(b\rightarrow_s(b\rightarrow_s a)')'$.
\end{proof}

\noindent The semantics for $ \wedge$ and $ \vee$ are then given by \\
$V(\phi \wedge \psi)=V(\phi)\otimes(V(\phi)\rightarrow_s V(\psi))=(V(\phi)\rightarrow_s (V(\phi)\rightarrow_s V(\psi))')'= V(\phi)\wedge V(\psi)$;\\
$V(\phi \vee \psi)=V(\phi)\vee V(\psi)$.\\

\noindent \textbf{Note.} If $V(\phi) \perp V(\psi)$ then $V(\phi\wedgedot \psi)=V(\phi)\od V(\psi)$ and $V(\phi \veedot \psi)=V(\phi)\op V(\psi)$.

\begin{definition}
We say a formula $\phi$ in LEL is valid in a model $\mathcal{E}=\langle E, V\rangle$ if $V(\phi)=1$.
\end{definition}
 
\subsection{An axiom system for LEL}
In this section, we define an axiom system for LEL. Along the same lines as  \cite{Pavicic1992}, where they define an axiom system for a logic of orthomodular lattices, we use axiom schemata instead of axioms and from now on whenever we mention axioms we mean axiom schemata. In the presentation of axioms and rules, $\vdash$, $\&$, $\Rightarrow$ and $\Leftrightarrow$ are to be understood as symbols in the metalanguage that are interpreted as ``it can be asserted in LEL'', and the usual classical connectives, ``AND", ``IF .. THEN..."  and ``IF AND ONLY IF" respectively and we will write $\vdash \phi\leftrightarrow \psi$ for $\vdash \phi\rightarrow \psi\ \&\ \vdash \psi\rightarrow \phi$.\\

\textbf{Axioms:}
\begin{itemize}
\addtolength{\itemindent}{1cm}
\item[(A1)]{\makebox[3cm] {$\vdash \phi\rightarrow \phi$; \hfill}
{(A2) $\vdash \phi\leftrightarrow\neg\neg \phi$; \hfill} (A3) $\vdash \phi\rightarrow\top$;}
\item[(A4)] {\makebox[3cm] {$\vdash \phi\wedge \psi\rightarrow \phi$;\hfill}
{(A5) $\vdash \phi\wedge \psi\rightarrow \psi$.\hfill}{} }
\end{itemize}

\textbf{Rules:}
\begin{itemize}
\addtolength{\itemindent}{3mm}
\item[(R1)] $\vdash \phi\quad\Leftrightarrow\quad\vdash \top\rightarrow \phi$;
\item[(R2)] $\vdash \phi\rightarrow \psi\quad\&\quad\vdash \psi\rightarrow \chi\quad\Rightarrow\quad\vdash \phi\rightarrow \chi$; 
\item[(R3)] $\vdash \phi\rightarrow \psi\quad\Rightarrow\quad\vdash\neg \psi\rightarrow\neg \phi$;
\item[(R4)] $\vdash \phi\rightarrow \psi\quad\Rightarrow\quad\vdash(\neg \phi\rightarrow \psi)\leftrightarrow(\neg \psi\rightarrow \phi)$;
\item[(R5)] $\vdash \phi\rightarrow \psi\quad\Rightarrow\quad\vdash \phi\rightarrow(\neg \phi\rightarrow \psi)$;
\item[(R6)] $\vdash \phi\rightarrow \psi\quad\Rightarrow\quad\vdash \chi\wedgedot \phi\rightarrow \chi\wedgedot \psi$;
\item[(R7)] $\vdash \phi\leftrightarrow \psi\quad\Rightarrow\quad\vdash \phi\wedgedot \chi\leftrightarrow \psi\wedgedot \chi$; 
\item[(R8)] $\vdash \phi\rightarrow \psi\quad\&\quad\vdash \phi\rightarrow \chi\quad\Rightarrow\quad\vdash \phi\rightarrow \psi\wedge \chi$;
\item[(R9)] $\vdash \phi\rightarrow\neg \psi\ \ \&\ \ \vdash \phi\rightarrow\neg \chi\ \ \&\ \ \vdash(\neg \phi\rightarrow \psi)\rightarrow\neg \chi\ \Rightarrow\ \vdash(\neg \phi\rightarrow \chi)\rightarrow\neg \psi$;
\item[(R10)] $\vdash \neg \psi\rightarrow \chi\ \ \&\ \ \vdash \neg \phi\rightarrow \psi\wedgedot \chi\ \ \Rightarrow\ \ \vdash \phi\wedgedot(\psi\wedgedot \chi)\leftrightarrow (\phi\wedgedot \psi)\wedgedot \chi$.
\end{itemize}
\vspace{2mm}

\begin{lemma}
The followings are provable in LEL:
\begin{itemize}
\addtolength{\itemindent}{1cm}
\item[{\rm(A6)}] {\makebox[5.5cm] {$\vdash \phi\leftrightarrow \phi\wedgedot\top$; \hfill} \rm(R11) $\vdash \phi\quad\Leftrightarrow\quad\vdash \phi\leftrightarrow\top$;}

\item[{\rm(R12)}]{\makebox[5.5cm] {$\vdash \phi\quad\&\quad\vdash \phi\rightarrow \psi\quad\Rightarrow\quad\vdash \psi$; \hfill}\rm(R13) $\vdash \phi\quad\Rightarrow\quad\vdash \psi\rightarrow \phi$;}
\item[{\rm(R14)}]{\makebox[5.5cm] {$\vdash \phi\rightarrow \psi\quad\Leftrightarrow\quad\vdash \phi\rightarrow \phi\wedge \psi$.\hfill}}
\end{itemize}
\end{lemma}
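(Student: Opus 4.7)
The plan is to derive the five items in an order that respects their dependencies: first the pure metarules (R12), (R13), (R11), (R14), and then use (R11) to settle (A6).

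For (R12), modus ponens: from $\vdash\phi$ apply (R1) to obtain $\vdash\top\rightarrow\phi$, chain with the hypothesis $\vdash\phi\rightarrow\psi$ by (R2) to get $\vdash\top\rightarrow\psi$, and apply (R1) again to extract $\vdash\psi$. Rule (R13) is a one-line chain: from $\vdash\phi$ get $\vdash\top\rightarrow\phi$ via (R1), take $\vdash\psi\rightarrow\top$ from (A3), and apply (R2). For (R11), the forward direction conjoins $\vdash\top\rightarrow\phi$ (from (R1)) with $\vdash\phi\rightarrow\top$ (from (A3)); the backward direction extracts $\vdash\top\rightarrow\phi$ from $\vdash\phi\leftrightarrow\top$ and applies (R1). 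For (R14), the forward direction combines $\vdash\phi\rightarrow\phi$ (A1) with the hypothesis $\vdash\phi\rightarrow\psi$ using (R8), while the converse chains the hypothesis with (A5) using (R2).

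The substantive case is (A6), $\vdash\phi\leftrightarrow\phi\wedgedot\top$. My plan is to route through the defined lattice conjunction $\phi\wedge\psi:=\phi\wedgedot(\phi\rightarrow\psi)$ rather than unfold $\phi\wedgedot\top=\neg(\phi\rightarrow\neg\top)$ directly. First show $\vdash\phi\leftrightarrow\phi\wedge\top$: the $\rightarrow$-half comes from (A1) and (A3) combined by (R8), and the $\leftarrow$-half is just (A4). Next, since $\vdash\phi\rightarrow\top$ is an instance of (A3), the already-proved (R11) yields $\vdash(\phi\rightarrow\top)\leftrightarrow\top$. Two applications of (R6) with $\chi:=\phi$, one to each direction of this equivalence, give $\vdash\phi\wedgedot(\phi\rightarrow\top)\leftrightarrow\phi\wedgedot\top$, which by the abbreviation for $\wedge$ reads $\vdash\phi\wedge\top\leftrightarrow\phi\wedgedot\top$. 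Two applications of (R2) concatenate the two equivalences into $\vdash\phi\leftrightarrow\phi\wedgedot\top$.

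The principal obstacle I expect is choosing the right reduction path for (A6): the system does not provide a direct congruence rule for $\rightarrow$ with respect to $\leftrightarrow$, so any attempt to exploit the one-line fact $\vdash\neg\top\leftrightarrow\bot$ (an immediate instance of (A2) with $\phi:=\bot$, since $\top=\neg\bot$) and substitute inside $\phi\rightarrow\neg\top$ runs into a missing-congruence problem. Detouring through $\phi\wedge\top$ sidesteps this entirely, because the required equivalence $\vdash(\phi\rightarrow\top)\leftrightarrow\top$ sits in the second argument of $\wedgedot$, where monotonicity is available as the primitive rule (R6). Once this observation is made, the remaining steps are routine bookkeeping.
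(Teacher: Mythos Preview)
Your proofs of (R11)--(R14) are essentially identical to the paper's: the paper also derives them from (R1), (R2), (A3) for (R11); from (R1), (R2) for (R12); from (R1), (A3), (R2) for (R13); and from (A1), (R8) and (A5), (R2) for (R14), exactly as you do.

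For (A6) you take a genuinely different route. The paper simply unfolds $\phi\wedgedot\top=\neg(\phi\rightarrow\neg\top)$, identifies $\neg\top$ with $\bot$ to obtain $\neg(\phi\rightarrow\bot)=\neg\neg\phi$, and then invokes (A2) and (R2). Your observation is well taken that, with the definitions as stated ($\top:=\neg\bot$, so $\neg\top=\neg\neg\bot$), this identification is only a provable equivalence $\vdash\neg\top\leftrightarrow\bot$ (an instance of (A2)), not a syntactic equality; pushing it through the second argument of $\rightarrow$ would require a congruence rule for $\rightarrow$ that is not among the primitive rules. Your detour via $\phi\wedge\top=\phi\wedgedot(\phi\rightarrow\top)$ relocates the needed substitution to the second argument of $\wedgedot$, where (R6) supplies exactly the required monotonicity, so your argument goes through without that implicit step. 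The paper's approach is a one-liner but tacitly treats $\neg\top$ and $\bot$ as interchangeable abbreviations; yours is a bit longer but does not depend on that identification.
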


\begin{proof}
\noindent {\bf(A6)}: By definition of $\wedgedot$ we have $\phi\wedgedot\top=\neg(\phi\rightarrow\bot)=\neg(\neg \phi)$ then (A6) follows from (A2) and (R2).

\noindent {\bf(R11)}: This is clear from (A3) and (R1).

\noindent {\bf(R12)}: Let $\vdash \phi$ and $\vdash \phi\rightarrow \psi$. Then from (R1) we have $\vdash \top\rightarrow \phi$ and using (R2), we get $\vdash \top\rightarrow \psi$. Hence applying (R1) again we have $\vdash \psi$.

\noindent {\bf(R13)}: Let $\vdash \phi$. Then $\vdash \top\rightarrow \phi$ follows form (R1) and $\vdash \psi\rightarrow\top$ follows from (A3). Thus using (R2), we have $\vdash \psi\rightarrow \phi$.

\noindent {\bf(R14)}: Let $\vdash \phi\rightarrow \psi$. Then, from (A1) and (R8), we get $\vdash \phi\rightarrow \phi\wedge \psi$. Conversely, suppose $\vdash \phi\rightarrow \phi\wedge \psi$. Then, from (A5) and (R2), we have $\vdash \phi\rightarrow \psi$.\\
\end{proof}

\begin{theorem}{\rm(soundness)}
If $\vdash \phi$ then $\phi$ is a valid formula in every model of LEL.
\end{theorem}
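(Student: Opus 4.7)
The plan is to prove soundness by induction on the length of a derivation of $\phi$. It suffices to check that (i) every axiom schema receives value $1$ under any valuation $V$ in any lattice effect algebra $E$, and (ii) each inference rule preserves the property of being valued $1$. The key semantic bridge, used throughout, is the equivalence $V(\phi\rightarrow\psi)=1 \iff V(\phi)\leq V(\psi)$, which is property (CI5) in the involutive CI-lattice $(E;\leq,',\otimes,\rightarrow_s,0,1)$ from Proposition \ref{LEAtoCI}. I will also repeatedly exploit the computation $V(\top)\rightarrow_s V(\phi)=V(\phi)$ and the fact that when $V(\phi)\leq V(\psi)'$, that is, $V(\phi)\perp V(\psi)$ in the underlying effect algebra, the Sasaki expression $V(\phi)'\rightarrow_s V(\psi)$ reduces to the effect-algebra sum $V(\phi)\op V(\psi)$, and dually $V(\phi)\otimes V(\psi)$ reduces to $V(\phi)\od V(\psi)$ when $V(\phi)'\leq V(\psi)$.

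The axioms are immediate: A1 from reflexivity of $\leq$, A2 from property (e1), A3 from $V(\phi)\leq 1$, and A4--A5 from the defining inequalities of the meet. Among the rules, the routine verifications are R1 (from the computation $V(\top)\rightarrow_s V(\phi)=V(\phi)$), R2 (transitivity of $\leq$), R3 (property (e7)), R6 (monotonicity of $\otimes$ in its second argument, a consequence of residuation), R7 (substitution of equal elements), R8 (the universal property of the meet), and R5 (since $a\leq a\op(a'\wedge b)=a'\rightarrow_s b$ follows from the general fact that $a\leq a\op x$ whenever $a\perp x$, which is property (W5i) applied inside the underlying weak lattice effect algebra). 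Rule R4 reduces to the identity $V(\phi)'\otimes V(\psi)'=V(\psi)'\otimes V(\phi)'$ whenever $V(\phi)\leq V(\psi)$, and is verified by unfolding both Sasaki products using Lemma \ref{SAwedge} together with the divisibility and self-adjointness laws of Proposition \ref{LEAtoCI}.

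The main obstacle is rules R9 and R10, but both dissolve once the Sasaki operations are replaced by the effect-algebra primitives under the orthogonality that the hypotheses impose. For R9, the premises $V(\phi)\leq V(\psi)'$ and $V(\phi)\leq V(\chi)'$ give the orthogonalities $V(\phi)\perp V(\psi)$ and $V(\phi)\perp V(\chi)$, so that $V(\phi)'\rightarrow_s V(\psi)=V(\phi)\op V(\psi)$ and analogously for $\chi$; the third premise $V(\phi)\op V(\psi)\leq V(\chi)'$ then expresses that $V(\phi)\op V(\psi)\op V(\chi)$ is defined, and commutativity (E1) with associativity (E2) of $\op$ yield the conclusion $V(\phi)\op V(\chi)\leq V(\psi)'$. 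For R10, the premise $V(\psi)'\leq V(\chi)$ lets me write $V(\psi)\otimes V(\chi)=(V(\psi)'\op V(\chi)')'$, and $V(\phi)'\leq V(\psi)\otimes V(\chi)$ then forces $V(\phi)'\op V(\psi)'\op V(\chi)'$ to be defined. A direct computation shows that both $V(\phi)\otimes(V(\psi)\otimes V(\chi))$ and $(V(\phi)\otimes V(\psi))\otimes V(\chi)$ equal $(V(\phi)'\op V(\psi)'\op V(\chi)')'$, so associativity (E2) of $\op$ delivers the required bi-implication. The delicate technical step throughout is the bookkeeping of orthogonality conditions so that Sasaki operations may legitimately be replaced by $\op$ and $\od$ on the correct arguments; once that is in hand, the effect-algebra axioms (E1)--(E4) close the argument.
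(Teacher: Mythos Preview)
Your proof is correct and follows essentially the same approach as the paper: both note the key equivalence $V(\phi\rightarrow\psi)=1\iff V(\phi)\leq V(\psi)$, treat the axioms and most rules as routine, and identify (R9) and (R10) as the substantive cases. The paper dispatches (R9) and (R10) by observing that they are direct translations of (cw3) and (cw4), whereas you unfold this explicitly by reducing the Sasaki operations to $\op$ and $\od$ under the orthogonality constraints imposed by the premises---but this is precisely what verifying (cw3) and (cw4) in a lattice effect algebra amounts to, so the content is the same.
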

\begin{proof}
\noindent First we should notice that, by Theorem \ref{RD} (RD4), $V(\phi \rightarrow \psi)=1$ if and only if $V(\phi)\leq V(\psi)$. The soundness of all axioms and rules will then follow easily except for (R9) and (R10). Notice then that (R9) and (R10) are the straightforward translation of (cw3) and (cw4), respectively.
\end{proof}

\begin{definition}
Consider the free generated algebra $\mathcal{A}=\langle P, \rightarrow,\bot\rangle$ in which $P$ is the set of atomic propositions and the operations satisfy the axioms and rules of LEL, we define the relation $\equiv$ on $\mathcal{A}$ as
$$\phi\equiv \psi\quad\text{iff}\quad \vdash \phi\leftrightarrow \psi.$$
\end{definition}

\begin{lemma}
The relation $\equiv$ is a congruence on $\mathcal{A}$.
\end{lemma}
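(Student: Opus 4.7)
The plan is to verify the equivalence-relation properties of $\equiv$ and then its compatibility with the operations of $\mathcal{A}=\langle P,\rightarrow,\bot\rangle$. Reflexivity is obtained by rewriting (A1) as $\vdash\phi\leftrightarrow\phi$; symmetry is immediate from the symmetric shape of $\leftrightarrow$; and transitivity follows by applying (R2) to each directional half of $\phi\leftrightarrow\psi$ and $\psi\leftrightarrow\chi$. Compatibility with the constant $\bot$ is automatic, so the real content is compatibility with the binary $\rightarrow$, which by transitivity of $\equiv$ splits into the two one-sided claims $\phi\equiv\psi\Rightarrow\phi\rightarrow\chi\equiv\psi\rightarrow\chi$ and $\chi\equiv\chi'\Rightarrow\phi\rightarrow\chi\equiv\phi\rightarrow\chi'$.

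The intended route is to first record two auxiliary congruences. Compatibility of the defined connective $\neg$ with $\equiv$ follows by applying (R3) to each direction of $\phi\leftrightarrow\psi$. Compatibility of $\wedgedot$ with $\equiv$ in its left argument is precisely (R7), and right-compatibility follows by applying (R6) to each direction of $\phi\leftrightarrow\psi$. Combining these with the definitional rewriting $\phi\wedgedot\psi=\neg(\phi\rightarrow\neg\psi)$ and double negation (A2), one transfers the congruences back to $\rightarrow$: from $\chi\equiv\chi'$ one obtains $\phi\wedgedot\neg\chi\equiv\phi\wedgedot\neg\chi'$, hence $\neg(\phi\wedgedot\neg\chi)\equiv\neg(\phi\wedgedot\neg\chi')$; unfolding $\wedgedot$ and applying (A2) with (R2) at the outer level yields $(\phi\rightarrow\neg\neg\chi)\equiv(\phi\rightarrow\neg\neg\chi')$, and an analogous chain addresses the left argument.

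The principal obstacle is the final collapsing step: to reduce $(\phi\rightarrow\neg\neg\chi)\equiv(\phi\rightarrow\neg\neg\chi')$ to $(\phi\rightarrow\chi)\equiv(\phi\rightarrow\chi')$ without circular appeal to the very right-argument congruence of $\rightarrow$ that we are establishing. My plan is to derive the base identity $\vdash(\phi\rightarrow\chi)\leftrightarrow(\phi\rightarrow\neg\neg\chi)$ directly from (A2), (R3), (R6), and (R2): starting from the two (A2) instances $\vdash\chi\rightarrow\neg\neg\chi$ and $\vdash\neg\neg\chi\rightarrow\chi$, applying (R6) to each and then following with (R3) and (A2)$+$(R2) to cancel the outer double negations, one obtains biconditionals that can be composed by (R2) after cancelling matched inner negations. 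Once this base case is secured, the same manipulations with (R7) in place of (R6) handle the left argument, and transitivity of $\equiv$ then yields the full joint compatibility $\phi_{1}\rightarrow\phi_{2}\equiv\psi_{1}\rightarrow\psi_{2}$ from $\phi_{i}\equiv\psi_{i}$, completing the proof that $\equiv$ is a congruence.
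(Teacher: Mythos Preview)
Your core argument matches the paper's exactly: reflexivity from (A1), symmetry from the shape of $\leftrightarrow$, transitivity from (R2); then $\neg$-compatibility from (R3) and $\wedgedot$-compatibility from (R7) for the left argument and (R6) for the right. That is precisely where the paper's proof stops.

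You go further than the paper by attempting to establish compatibility with the primitive connective $\rightarrow$ itself, and you correctly flag the circularity risk: the $\wedgedot$ route only delivers $(\phi\rightarrow\neg\neg\chi)\equiv(\phi\rightarrow\neg\neg\chi')$, and collapsing the inner $\neg\neg$ is exactly the right-congruence you are trying to prove. The paper simply does not address this; it proves compatibility only for $\neg$ and $\wedgedot$ even though $\mathcal{A}=\langle P,\rightarrow,\bot\rangle$, so in that sense your analysis is more scrupulous than the original.

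However, your proposed resolution does not close the gap. Applying (R6) to the (A2) instances $\vdash\chi\rightarrow\neg\neg\chi$ and $\vdash\neg\neg\chi\rightarrow\chi$, then using (R3) and (A2)$+$(R2) to strip the \emph{outer} double negations, yields only
\[
\vdash(\phi\rightarrow\neg\sigma)\leftrightarrow(\phi\rightarrow\neg\neg\neg\sigma),
\]
because unfolding $\phi\wedgedot\sigma=\neg(\phi\rightarrow\neg\sigma)$ places a negation on the second argument of $\rightarrow$. In other words, you obtain the base identity only for second arguments that are already syntactic negations, not for an arbitrary $\chi$. Your phrase ``cancelling matched inner negations'' does not correspond to any available rule: none of (A2), (R3), (R6), (R2) lets you rewrite inside the scope of $\rightarrow$ without presupposing the congruence in question. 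The same obstruction appears in the left-argument case via (R7). So the honest status is: your proof reproduces everything the paper proves, correctly identifies a lacuna the paper glosses over, but your sketch for filling it does not succeed as written.
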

\begin{proof}
It is easy to see that $\equiv$ an equivalence relation so we show that it preserves the operations. Let $\phi\equiv \psi$. Then $\vdash \phi\leftrightarrow \psi$ and from (R3) we have $\vdash \neg \phi\leftrightarrow \neg \psi$ which means $\neg \phi\equiv\neg \psi$. Now suppose that $\phi\equiv \psi$ and $\chi\equiv \theta$. From $\phi\equiv \psi$ by (R7) we get $\phi\wedgedot \chi\equiv \psi\wedgedot \chi$ and from $\chi\equiv \theta$ by (R6) we get $\psi\wedgedot \chi\equiv \psi\wedgedot \theta$. Then from (R2) we have $\phi\wedgedot \chi\equiv \psi\wedgedot \theta$ and similarly $\chi\wedgedot \phi\equiv \theta\wedgedot \psi$.
\end{proof}
\begin{proposition}
The Lindenbaum–Tarski algebra $\mathcal{A}/\!\!\equiv$ with the following operations is a lattice effect algebra.
\begin{align*}
0=&[\bot],\\
1=&[\top],\\
[\phi]'=&[\neg \phi],\\
[\phi] \cdot [\psi]=&[\phi\wedgedot \psi],\\
[\phi]\to[\psi]=&[\phi\rightarrow \psi].
\end{align*}
\end{proposition}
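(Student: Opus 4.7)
The plan is to reduce the proposition to the Corollary following Theorem \ref{invlatweak}: it suffices to exhibit $\mathcal{A}/\!\!\equiv$ as an involutive CI-lattice satisfying conditions (cw1)--(cw4), at which point the Corollary equips it with the lattice-effect-algebra structure, with $[\phi]\op[\psi]=[\neg\phi\to\psi]$ exactly when $[\phi]\leq[\neg\psi]$, matching the intended semantic reading of $\veedot$.

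I will first build the bounded involutive lattice on $\mathcal{A}/\!\!\equiv$. Putting $[\phi]\leq[\psi]$ iff $\vdash\phi\to\psi$, the relation is well-defined modulo $\equiv$ by the preceding congruence lemma and is a partial order by (A1), (R2) and the definition of $\equiv$. The class $[\top]$ is the greatest element by (A3), and $[\bot]$ the least after a short derivation of $\vdash\bot\to\phi$ from (A3), (R3) and (A2). The class $[\phi\wedge\psi]$ is the meet by (A4), (A5) and (R8); the join is dual via the involution $[\phi]\mapsto[\neg\phi]$, which is involutive by (A2) and order-reversing by (R3).

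I will then verify the CI-lattice axioms together with (cw1)--(cw4). The right-identity part of (CI1) is immediate from (A6); the left-identity $[\top]\cdot[\phi]=[\phi]$ reduces, after unfolding $\wedgedot$, to a derivation combining (A2), (R2), (R3) and the congruence lemma. For the residuation law (CI2), $[\phi\wedgedot\chi]\leq[\psi]$ iff $[\chi]\leq[\phi\to\psi]$, I rewrite $\phi\wedgedot\chi$ as $\neg(\phi\to\neg\chi)$, use (R3) to flip the negated inequality via contraposition, and close the loop with (R4) and (A2). Condition (cw1) is exactly the definition of $\wedgedot$; (cw3) and (cw4) are (R9) and (R10) verbatim. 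For (cw2), under the hypothesis $\vdash\phi\to\neg\psi$, the symmetry half $\vdash(\neg\phi\to\psi)\leftrightarrow(\neg\psi\to\phi)$ comes from (R4) combined with (R3) and (A2); and the inequality $\vdash\phi\to(\neg\phi\to\psi)$ follows once (CI2) is in place, since $\neg\phi\wedgedot\phi$ is provably $\bot$ (via (A1), (R11), the definition of $\wedgedot$, and (A2)), so the residuation of $\vdash\bot\to\psi$ delivers the required inequality.

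Once (cw1)--(cw4) are established, the Corollary applies and equips $\mathcal{A}/\!\!\equiv$ with the lattice-effect-algebra structure. I expect the principal obstacle to be (CI2) and the symmetry half of (cw2): both require pushing negations across Sasaki arrows via non-routine combinations of (R3), (R4) and (A2) rather than by invoking a single rule, and it is precisely here that the axiomatisation of LEL must be exercised tightly to force the non-commutative Sasaki operations into the patterns demanded by the Corollary.
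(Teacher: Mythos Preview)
Your overall plan is sound, and it is close to the paper's, but you package the last step differently. The paper does \emph{not} pass through the Corollary: it verifies only (cw1)--(cw5) from the rules (citing (R4), (R5), (R9), (R10), (A6)), obtains a weak lattice effect algebra via Theorem~\ref{invlatweak}, and then checks condition (RD4) of Theorem~\ref{RD} directly, using (R14) and (R11) to see that $[\phi]\leq[\psi]$ iff $[\phi]\rightarrow_s[\psi]=1$. Your route via the Corollary is legitimate but costs you two extra verifications that the paper avoids, namely the left identity in (CI1) and the full residuation (CI2). A pleasant side effect of your detour is that (R5) is never needed: once (CI2) is in hand, the second half of (cw2) follows from $\neg\phi\wedgedot\phi\equiv\bot$ as you say.

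Two of your rule citations need repair. For (CI2) the relevant rule is (R6), not (R4): from $\vdash\chi\to(\phi\to\psi)$ one gets $\vdash\phi\wedgedot\chi\to\phi\wedgedot(\phi\to\psi)$ by (R6), and since $\phi\wedgedot(\phi\to\psi)=\phi\wedge\psi$ by definition, (A5) and (R2) finish the forward direction; the backward direction then follows by applying the forward direction after the contrapositions you describe, using $\phi\wedgedot\neg\psi\equiv\neg(\phi\to\psi)$ via (A2). For the left identity $[\top]\cdot[\phi]=[\phi]$, the combination (A2), (R2), (R3) is not enough by itself; the key move is (R4) instantiated with $\phi:=\bot$. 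Since $\vdash\bot\to\psi$ (from (A3), (R3), (A2)), (R4) yields $\vdash(\top\to\psi)\leftrightarrow(\neg\psi\to\bot)$, i.e.\ $\vdash(\top\to\psi)\leftrightarrow\neg\neg\psi$, hence $\vdash(\top\to\psi)\leftrightarrow\psi$ by (A2), and $\top\wedgedot\phi\equiv\phi$ follows. With these corrections your invocation of the Corollary goes through.
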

\begin{proof}
Let $[\phi],[\psi]\in \mathcal{A}/\!\!\equiv$ and define $[\phi]\leq[\psi]$ if and only if $\vdash \phi\rightarrow \psi$. Notice that from (A1) and (R2) this order is well defined.

\noindent First we show that $\mathcal{A}/\!\!\equiv$ is an involutive lattice. From (A4) and (A5) we get $[\phi\wedge \psi]\leq [\phi], [\psi]$ then from (R8) we have that $[\phi\wedge \psi]$ is the greatest lower bound of $[\phi]$ and $[\psi]$ that is $[\phi] \wedge [\psi]=[\phi\wedge \psi]$. Moreover by (A2) and (R3) the operation $'$ is an involution and thus $[\neg \phi\wedge\neg \psi]'$ will be the smallest upper bound of $[\phi]$ and $[\psi]$, i.e. $[\phi] \vee [\psi]$, and hence $\mathcal{A}/\!\equiv$ is an involutive lattice. 

\noindent Using the definition of $\wedgedot$, (R4), (R5), (R9), (R10) and (A6) we obtain (cw1), (cw2), (cw3), (cw4) and (cw5) and thus by Theorem \ref{invlatweak} $\mathcal{A}/\!\!\equiv$ is a weak lattice effect algebra in which $[\phi]\op[\psi]=[\phi]'\to [\psi]$ if and only if $[\phi]\leq [\psi]'$. Hence $[\phi]\rightarrow_s[\psi]=[\phi]'\op ([\phi]\wedge[\psi])=[\phi] \to ([\phi]\wedge[\psi])= [\phi] \to [\phi \wedge \psi]$. 
Now from definition of the order we have $[\phi]\leq[\psi]$ if and only if $\vdash \phi\rightarrow \psi$ if and only if $\vdash \phi\rightarrow \phi\wedge \psi$ (by (R12)) if and only if $[\phi]\rightarrow_s[\psi]=[\phi]\to [\phi \wedge \psi]=[\phi \rightarrow \phi\wedge \psi]=[\top]=1$ (by (R11)). Therefore by (RD4) in Theorem \ref{RD} we have that $\mathcal{A}/\!\equiv$ is a lattice effect algebra.
\end{proof}
\begin{corollary}\label{canonicalmodel}
The structure $\bar{\mathcal{E}}=\langle \mathcal{A}/\!\!\equiv, \bar{V}\rangle$ is a model of LEL in which the valuation $\bar{V}:P\rightarrow \mathcal{A}/\!\!\equiv$ is defined as $\bar{V}(p)=[p]$ for any atomic proposition $p$.
\end{corollary}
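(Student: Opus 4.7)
The plan is to exploit the previous proposition directly: once $\mathcal{A}/\!\!\equiv$ has been established as a lattice effect algebra, the mapping $\bar{V}:P\to\mathcal{A}/\!\!\equiv$ given by $\bar{V}(p)=[p]$ is automatically a valuation, since a valuation in the definition of a model is simply any function from the atomic propositions to the carrier of a lattice effect algebra. Therefore $\bar{\mathcal{E}}=\langle\mathcal{A}/\!\!\equiv,\bar{V}\rangle$ qualifies as a model of LEL almost by definition.

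What is worth spelling out is the natural compatibility property: by structural induction on formula complexity one verifies that $\bar{V}(\phi)=[\phi]$ for every formula $\phi$. The base cases are $\bar{V}(p)=[p]$ (by the definition of $\bar V$) and $\bar{V}(\bot)=0=[\bot]$ (by the definition of $0$ in the quotient algebra). Since $\rightarrow$ is the only primitive connective, the inductive step reduces to
\[
\bar{V}(\phi\rightarrow\psi)=\bar{V}(\phi)\rightarrow_{s}\bar{V}(\psi)=[\phi]\rightarrow_{s}[\psi]=[\phi\rightarrow\psi],
\]
where the last equality is exactly the identification, carried out inside the proof of the previous proposition, between the CI-lattice arrow $[\phi]\to[\psi]:=[\phi\rightarrow\psi]$ on $\mathcal{A}/\!\!\equiv$ and the Sasaki arrow $\rightarrow_{s}$ coming from the effect-algebraic structure.

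The clauses for the derived connectives $\neg$, $\wedgedot$, $\veedot$, $\wedge$ and $\vee$ then hold for free, because on both sides they are defined by the same expressions in terms of $\rightarrow$/$\rightarrow_{s}$ and $\bot$/$0$; in particular $\bar{V}(\neg\phi)=[\phi]'=[\neg\phi]$ and $\bar{V}(\phi\wedgedot\psi)=[\phi]\cdot[\psi]=[\phi\wedgedot\psi]$. There is no real obstacle here: the statement is a genuine corollary, an unpacking of definitions once the Lindenbaum--Tarski algebra has been constructed. Its role is to provide the canonical model used to derive completeness, since any $\phi$ with $\bar{V}(\phi)=1$ in $\bar{\mathcal{E}}$ yields $[\phi]=[\top]$, i.e.\ $\vdash\phi\leftrightarrow\top$, and hence $\vdash\phi$ by (R11).
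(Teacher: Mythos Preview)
Your proposal is correct and matches the paper's treatment: the paper gives no proof at all for this corollary, treating it as immediate from the preceding proposition and the definition of a model, which is exactly your first paragraph. Your additional structural-induction argument that $\bar V(\phi)=[\phi]$ for all formulas is not in the paper but is precisely the fact tacitly used in the completeness proof that follows; one small caveat is that the proposition's proof literally establishes $[\phi]\rightarrow_s[\psi]=[\phi\to\phi\wedge\psi]$ rather than $[\phi\to\psi]$, so the cleanest justification for the inductive step is to invoke the earlier corollary (after Theorem~\ref{invlatweak}) which shows $a\rightarrow_s b=a\to b$ in this setting.
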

\begin{lemma}\label{complete}
For every formula $\phi$, $\vdash \phi$ if and only if $[\phi]=1$.
\end{lemma}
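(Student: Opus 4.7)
The plan is to unpack the definitions of the Lindenbaum--Tarski algebra and chain them to the rule (R11), which already establishes the key bridge between provability and equivalence with $\top$.

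First I would observe that by definition of the algebra $\mathcal{A}/\!\!\equiv$, the top element is $1=[\top]$, so the statement $[\phi]=1$ is literally the statement that $[\phi]=[\top]$. By definition of the congruence $\equiv$, this is equivalent to $\phi\equiv\top$, which unfolds to $\vdash\phi\leftrightarrow\top$.

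Next I would invoke (R11), which asserts precisely $\vdash\phi\Leftrightarrow\vdash\phi\leftrightarrow\top$. Composing these equivalences gives
\[
\vdash\phi\ \Leftrightarrow\ \vdash\phi\leftrightarrow\top\ \Leftrightarrow\ \phi\equiv\top\ \Leftrightarrow\ [\phi]=[\top]\ \Leftrightarrow\ [\phi]=1,
\]
which is the desired biconditional.

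There is no real obstacle here: the work has already been done in establishing (R11) and in verifying that $\mathcal{A}/\!\!\equiv$ is well defined with $1=[\top]$. The lemma is essentially a bookkeeping step whose role is to set up the completeness theorem, by allowing us to transfer validity in the canonical model $\bar{\mathcal{E}}$ of Corollary \ref{canonicalmodel} back to syntactic provability in LEL.
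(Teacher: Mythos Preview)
Your proof is correct and matches the paper's own argument essentially verbatim: the paper also chains (R11) with the definition of the congruence to obtain $\vdash\phi \Leftrightarrow \vdash\phi\leftrightarrow\top \Leftrightarrow [\phi]=[\top]=1$. The only difference is cosmetic ordering---you start from $[\phi]=1$ and unfold, while the paper starts from $\vdash\phi$.
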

\begin{proof}
$\vdash \phi$ if and only if $\vdash \phi\leftrightarrow\top$ (by (R11)) if and only if $[\phi]=[\top]=1$, by the definition of the congruence. 
\end{proof}
\begin{theorem}{\rm(completeness)}
If $\phi$ is a valid formula in every model of LEL then $\vdash \phi$.
\end{theorem}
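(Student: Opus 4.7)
The plan is to use the canonical model construction already set up in Corollary \ref{canonicalmodel} and reduce the validity assumption on $\phi$ to the algebraic fact that $[\phi]=1$, which then gives $\vdash \phi$ by Lemma \ref{complete}. Concretely, the strategy is: given a formula $\phi$ valid in every model, specialise the assumption to the canonical model $\bar{\mathcal{E}}=\langle \mathcal{A}/\!\!\equiv, \bar{V}\rangle$ so that $\bar{V}(\phi)=1$, then show by structural induction that $\bar{V}(\phi)=[\phi]$, and finally invoke Lemma \ref{complete}.

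First I would record that for atomic $p$, $\bar{V}(p)=[p]$ by definition of the canonical valuation, and $\bar{V}(\bot)=0=[\bot]$ by the definition of the operations on $\mathcal{A}/\!\!\equiv$. For the inductive step on negation, using $\bar{V}(\neg\phi)=\bar{V}(\phi)'$ from the semantic clause and the inductive hypothesis, the result follows from the identity $[\phi]'=[\neg \phi]$ built into the quotient.

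The only non-routine case is implication. Here I would use the observation made in the proof of the previous proposition: the Lindenbaum–Tarski algebra is constructed via Theorem \ref{invlatweak} and the subsequent corollary, and in that construction the Sasaki arrow of the resulting lattice effect algebra agrees with the CI-lattice arrow, namely $[\phi]\rightarrow_s[\psi]=[\phi]\to[\psi]=[\phi\rightarrow\psi]$. Combining this with the semantic clause $\bar{V}(\phi\rightarrow\psi)=\bar{V}(\phi)\rightarrow_s\bar{V}(\psi)$ and the inductive hypothesis on $\phi$ and $\psi$ then yields $\bar{V}(\phi\rightarrow\psi)=[\phi\rightarrow\psi]$ as required. (The derived connectives $\wedgedot$, $\veedot$, $\wedge$, $\vee$ and $\top$ are handled analogously, but they can also be bypassed since they are abbreviations defined from $\rightarrow$ and $\bot$.)

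With the lemma $\bar{V}(\phi)=[\phi]$ in hand, the proof concludes in one line: the hypothesis that $\phi$ is valid in every model, applied to $\bar{\mathcal{E}}$, gives $[\phi]=\bar{V}(\phi)=1$, so by Lemma \ref{complete} we obtain $\vdash \phi$. The main (but still mild) obstacle is the implication case of the induction, where one has to be careful that the Sasaki arrow $\rightarrow_s$ read off from the LEA structure on $\mathcal{A}/\!\!\equiv$ coincides with the equivalence class $[\phi\rightarrow\psi]$; this is precisely what the final paragraph of the proof of the previous proposition guarantees via the corollary to Theorem \ref{invlatweak}, so no additional syntactic derivations are needed.
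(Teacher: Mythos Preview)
Your proposal is correct and follows exactly the paper's approach: specialise the validity hypothesis to the canonical model $\bar{\mathcal{E}}$, use $\bar{V}(\phi)=[\phi]$ together with Lemma \ref{complete} to conclude $\vdash\phi$. The only difference is that you spell out the structural induction establishing $\bar{V}(\phi)=[\phi]$ (and correctly flag the implication case as the one needing the identity $[\phi]\rightarrow_s[\psi]=[\phi\rightarrow\psi]$ from the preceding proposition and the corollary to Theorem \ref{invlatweak}), whereas the paper simply asserts this equality in passing.
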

\begin{proof}
From Corollary \ref{canonicalmodel}, $\bar{\mathcal{E}}$ is a model of LEL and by the assumption $\phi$ is a valid formula in it such that $\bar{V}(\phi)=[\phi]=1$. Therefore by Lemma \ref{complete}, $\vdash \phi$.
\end{proof}
\section{Conclusion}

As suggested in \cite{Foulis2012} lattice effect algebras can be enriched with natural binary operations of Sasaki product and Sasaki arrow that show a similar characteristic as the well studied Sasaki projection and Sasaki hook. Using this binary operations to represent conjunction and implication, the lattice effect algebras can be arranged into a conjunction/implication lattice and can be studied as a semantic model for a logical calculus in the same manner that orthomodular lattices and MV-algebras can act as the semantic models for sharp quantum logic and Lukasiewicz many valued logics.  

We studied the logical structure of lattice effect algebras enriched with these operations. We characterised the adjointness between the Sasaki product and Sasaki arrow on the lattice effect algebras in a manner analogues to the characterisation of the duality between Sasaki projection and Sasaki hook in terms of the orthomodular law on ortholattices. We then studied the logical structure of lattice effect algebras by presenting a sound and complete axiomatisation for a logical calculus for which the lattice effect algebras can act as a semantic model. 


\end{document}